\documentclass[11pt, reqno]{amsart}

\usepackage{amssymb}
\usepackage[margin=2.5cm]{geometry}
\usepackage{hyperref}
\usepackage{csquotes}

\newenvironment{itquote}
{\begin{quote}\itshape}
	{\end{quote}}

\newtheorem{theorem}{Theorem}[section]
\newtheorem{lemma}[theorem]{Lemma}
\newtheorem{lem}[theorem]{Lemma}
\newtheorem{conj}[theorem]{Conjecture}
\newtheorem{prop}[theorem]{Proposition}

\newtheorem{cor}[theorem]{Corollary}
\newtheorem{remark}[theorem]{Remark}

\theoremstyle{definition}
\newtheorem{definition}[theorem]{Definition}

\newcommand{\h}{\mathcal{H}}
\newcommand{\hp}{\mathcal{H}_{n,p}}

\newcommand{\M}{\mathcal{M}}
\newcommand{\N}{\mathcal{N}}
\newcommand{\C}{\mathcal{C}}
\newcommand{\Mn}{(\mathcal{M}_n)_{n\geq 2}}
\newcommand{\Ktt}{K_{3,3}}

\newcommand{\R}{\mathcal{R}}
\newcommand{\ntwo}{\binom{[n]}{2}}

\numberwithin{equation}{section}

\begin{document}

\title{On plane rigidity matroids}



\author{Mykhaylo Tyomkyn}
\address{Department of Applied Mathematics, Faculty of Mathematics and Physics, Charles University}
\curraddr{}
\email{tyomkyn@kam.mff.cuni.cz}
\thanks{Supported by GA\v{C}R grant 25-17377S}


\begin{abstract}
We establish new properties of matroids and matroidal families associated with rigidity in dimension $2$, including the generic rigidity matroid family $\mathcal{R}$ and Kalai's hyperconnectivity matroid family $\mathcal{H}$. 

Answering a question of Kalai~\cite{Kal85} in a strong form, we show that all connected cubic graphs, with exceptions of $K_4$ and $K_{3,3}$, are independent in every $2$-rigidity family. We also prove that $\mathcal{R}$ is the unique matroidal $2$-rigidity family in which $K_{3,3}$ is not a circuit. As a geometric corollary of this result and the Bolker-Roth theorem~\cite{BR80}, it follows that $\mathcal{H}$ and $\mathcal{R}$ are the only $2$-rigidity families associated with algebraic curves in $\mathbb{R}^2$. 

Bernstein~\cite{Ber17} used tropical geometry to characterize $\mathcal{H}$-independent graphs as those admitting an edge-ordering without directed cycles and alternating closed trails. We provide a combinatorial proof of the sufficiency direction, extending Bernstein's theorem to positive characteristic. It follows that the wedge power matroid of $n$ generic points in dimension $n-2$ does not depend on the field characteristic. 

As a corollary, we obtain a new property of cubic graphs: every connected cubic graph except $K_4$ and $K_{3,3}$ has an orientation without directed and alternating cycles. The current proof of this purely graph theoretic statement relies on tropical geometry and matroid theory.
\end{abstract}
\maketitle

\section{Introduction}\label{sec:intro}

A graph embedded in $\mathbb{R}^d$ as a framework of bars and joints is said to be \emph{rigid} if it does not admit any continuous motion other than the isometries of the entire space. The study of rigidity in Euclidean space is a classical discipline in mathematics, physics and engineering, dating back to the work of Euler~\cite{euler1776nova}, Cauchy~\cite{Cau13} and Maxwell~\cite{Max64}. The 1970s saw the emergence of \emph{combinatorial rigidity theory}, which, via the notion of infinitesimal rigidity, allows rigidity questions to be studied using methods from matroid theory and graph theory. This led to the notion of abstract rigidity matroids.

Abstract $d$-rigidity matroids are combinatorial objects defined in a way to emulate geometric rigidity properties of $d$-dimensional bar-joint frameworks. Besides their central role in rigidity theory, 
they have been increasingly used in modern statistics and information theory~\cite{Ber26, bernstein2024maximum, blekherman2019maximum, Bra25, Bra24, gopalan2017maximally, gross2018maximum}. It is therefore important that we understand their combinatorial properties.

All graphs in this paper are assumed to be finite. A \emph{graph matroid} is a matroid defined on the ground set $\binom{[n]}{2}$ interpreting the elements as edges and sets of elements as graphs on $[n]$ without isolated vertices. 
The following characterization was proved by Nguyen~\cite{Ngu10} to be equivalent to Graver's~\cite{Gra91} original definition. 
\begin{definition}
Let $d\geq 1$. A matroid $\M_n$ on $\binom{[n]}{2}$ is an (abstract) \emph{$d$-rigidity matroid} if the following conditions are satisfied.
\begin{enumerate}
	\item The rank of $\M_n$ is $dn-\binom{d+1}{2}$.
	\item Every copy of $K_{d+2}$ is an $\M_n$-circuit.
\end{enumerate}  
\end{definition}
\noindent
While for $d=1$ for each $n$ there is only example -- the cycle matroid of the complete graph $K_n$, a much richer theory is encountered already for $d=2$. Two main examples of $2$-rigidity matroids are the generic plane rigidity matroid $\R_n$ and Kalai's $2$-hyperconnectivity matroid $\h_n$ (we omit a second index $d$ for the dimension, as this will be $2$ throughout). They will be the focus of this paper.  

The matroid $\R_n$ is initially defined in terms of rigidity of generic bar-joint frameworks in the plane. However, it has multiple equivalent combinatorial descriptions, and we shall use the following one, due to Pollaczek-Geiringer~\cite{PG27}  and Laman~\cite{Lam70}, as the definition. For a verification of the matroid axioms and a more detailed account of rigidity in $\mathbb{R}^2$ we refer to~\cite{gra93textbook}. 
\begin{definition}\label{def:R}
Let $\R_n$ be the matroid on $\binom{[n]}{2}$ as follows. A graph $G\subseteq \binom{[n]}{2}$ is independent if and only if every subset of $2\leq m\leq n$ vertices induces at most $2m-3$ edges. 
\end{definition}
\noindent

The hyperconnectivity matroid $\h_n$ was originally defined by Kalai~\cite{Kal85} in terms of the exterior algebra of $n$-dimensional vector spaces. It is also known to be the algebraic matroid of skew-symmetric $n\times n$ matrices of rank at most $2$, related to the Pl\"ucker embedding of the Grassmannian $Gr(2,\mathbb{R})$ in $\mathbb{R}^{\binom{n}{2}}$. Geometrically, $\h_n$ is the infinitesimal rigidity matroid of $n$ points on any non-degenerate conic in the plane, see~\cite{Cre23} for more detail. 
We shall use the following explicit definition, known to be equivalent. 
\begin{definition}\label{def:hyperconn}
Let $K=\mathbb{Q}(r_1,\dots,r_n,b_1,\dots,b_n)$ 
and for $i=1,\dots,n$ let
$\mathbf{p_i}=(r_i,b_i)\in K^2$ (equivalently, one could work over $K=\mathbb{R}$ or $\mathbb{C}$, taking $\bf{p}_1,\dots,p_n$ to be $n$ generic points in $K^2$). Let $\h_n$ be the row matroid of the $\binom{n}{2}\times 2n$ matrix $M_n$ over $K$ given by 
\[M_n=\begin{pmatrix}
	\bf{p}_2 & -\bf{p}_1 & 0 &\dots&0 & 0\\
	\bf{p}_3 & 0 & -\bf{p}_1 &\dots&0 & 0\\
	\vdots & \vdots & \vdots & \dots & \vdots &\vdots\\
	\bf{p}_n & 0 & 0         &\dots& 0 & -\bf{p}_1\\
	0 & \bf{p}_3 & -\bf{p}_2 &\dots& 0 & 0\\
	\vdots & \vdots & \vdots & \dots & \vdots &\vdots\\
	0 & 0 & 0 & \dots & \bf{p}_n & -\bf{p}_{n-1}	
\end{pmatrix}
\]
That is, for $1\leq i<j\leq n$, the row indexed with $\{i,j\}$ has in columns $2i-1,2i,2j-1,2j$ the entries $r_j,b_j,-r_i,-b_i$ respectively, and entry $0$ otherwise.
\end{definition}
\subsection{Matroidal $2$-rigidity families}
Both $\R=(\R_n)_{n\geq 2}$ an $\h=(\h_n)_{n\geq 2}$ are matroidal families in the following sense. Let $v(G)$ denote the number of vertices of $G$.
\begin{definition}
A graph matroid $\M_n$ is \emph{symmetric} if for every $\M_n$-independent graph $G\subseteq\binom{[n]}{2}$ every isomorphic copy of $G$ is also $\M_n$-independent. 
A sequence of symmetric graph matroids $\M=(\M_n)_{n\geq n_0}$ is a \emph{matroidal family} if every $G\subseteq \binom{\mathbb N}{2}$ that is $\M_n$-independent for some $n$ is $\M_n$-independent for all $n\geq v(G)$.
\end{definition}
\noindent
Note that circuits in matroidal families are similarly symmetric and hereditary. We can therefore,  
slightly abusing notation, write ``$G$ is $\M$-independent/dependent/circuit". 
\begin{definition}
A \emph{$d$-rigidity family} is a matroidal family $\M=\Mn$ in which each $\M_n$ is of rank $dn-\binom{d+1}{2}$. 
\end{definition}
\noindent
This automatically implies that $K_{d+2}$ is a circuit and therefore each $\M_n$ is a $d$-rigidity matroid. So, $d$-rigidity families are hereditary families of symmetric $d$-rigidity matroids. 

The $d$-rigidity families were first defined by Kalai under the name ``hypergraphic sequences" in the same seminal paper~\cite{Kal85} that introduced hyperconnectivity. To quote from it: 
\begin{itquote} While the only $1$-hypergraphic sequence is the sequence of graphic matroids there seems to be a rich class of $k$-hypergraphic sequences of matroids (including $\h_k^n$ and $\R_k^n$) which deserves a further study.
\end{itquote}	
The smallest graph distinguishing between the $2$-rigidity families $\R$ and $\h$ is $K_{3,3}$, which is independent in the former, but a circuit in the latter. We prove that $K_{3,3}$ is in fact a circuit in \emph{every} $2$-rigidity family apart from $\R$.
\begin{theorem}\label{thm:Laman1}
In every $2$-rigidity family $\M\neq \R$, $\Ktt$ is a circuit.
\end{theorem}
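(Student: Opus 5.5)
The approach is to reduce to \emph{minimal} graphs that are Laman-independent but $\M$-dependent, and to show that the only possible minimal obstruction is $\Ktt$.

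\emph{Step 1: $\R$ is the freest family.} I first show that every $\M$-independent graph is $\R$-independent. Suppose $G$ has a set $X$ of $m\ge 2$ vertices inducing more than $2m-3$ edges. By the matroidal property, $G[X]$ is $\M_n$-independent if and only if it is $\M_m$-independent. But $\M_m$ has rank $2m-3$, so $G[X]$, and hence $G$, is dependent. Consequently, $\M\neq\R$ implies that some Laman-sparse graph is $\M$-dependent. Among all such graphs I choose $G$ with the minimum number of vertices, and then the minimum number of edges. Then $G$ is an $\M$-circuit, every proper subgraph of $G$ is $\M$-independent, and $|E(G)|\le 2v(G)-3$.

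\emph{Step 2: $0$-extensions and minimum degree.} I would prove that in any abstract $2$-rigidity matroid a $0$-extension preserves independence. Concretely, if $H$ is independent and $v$ is a new vertex joined to two vertices $x,y$, then $H+vx+vy$ is independent. The plan is to take $v$ joined to all of $V(H)$ together with $H$, and use the rank formula $2n-3$ on the vertex set with and without $v$. Combined with symmetry, this shows that the star at $v$ adds rank exactly $2$. Hence any two edges at $v$ are independent modulo the span of $H$. The degree-$1$ case is analogous. By minimality, every vertex of $G$ then has degree at least $3$. Since $|E(G)|\le 2v(G)-3$, the average degree is below $4$, so $G$ has a vertex $v$ of degree exactly $3$.

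\emph{Step 3: the $1$-extension step, which is the main obstacle.} Let $N(v)=\{a,b,c\}$. By Laman theory, some $G'=G-v+xy$ with $x,y\in\{a,b,c\}$ is Laman-sparse. By minimality of $G$, this $G'$ is $\M$-independent. I need to show that $G$ is $\M$-independent unless $\Ktt$ is an $\M$-circuit, which would contradict the choice of $G$.

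The tool I would try first is circuit elimination with the $K_4$-circuits. Suppose $G$ is dependent. Take $K_4$ on $\{v,a,b,x\}$-type sets and eliminate edges, in order to express $xy$ via a circuit through the degree-$3$ vertex. Playing this off against the alternative choices of the pair $xy$ should force a dependency supported on a bounded configuration around $v$ and its neighbours.

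I would then use minimality again, repeatedly peeling degree-$3$ vertices, to bring the configuration down to graphs on $6$ vertices with $9$ edges and minimum degree $3$. These are exactly $\Ktt$ and the triangular prism. The prism needs to be shown $\M$-independent in every family. I would do this by exhibiting it as two triangles, each $\M$-independent and spanning rank $3$ on its own vertices, joined by three disjoint edges, and then applying a rank-count argument in $\M_6$ using the circuit $K_4$.

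So if $G$ exists, the reduction lands on $\Ktt$, and hence $\Ktt$ is $\M$-dependent. Every proper subgraph of $\Ktt$ arises from smaller graphs by $0$-extensions, so $\Ktt$ is in fact an $\M$-circuit. The delicate part is making the elimination argument in Step 3 rigorous without any geometric genericity, using only symmetry, the rank condition, and heredity.
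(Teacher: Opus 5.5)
There is a genuine gap at Step 3, which is where all of the content lies. Every Laman graph is built from an edge by Henneberg $0$- and $1$-extensions. So the claim that a $1$-extension of an $\M$-independent graph stays $\M$-independent unless $\Ktt$ is a circuit is essentially a restatement of the theorem, and you give no mechanism for it.

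The tools you name cannot provide one. Because $G$ is an $\M$-circuit, no proper subgraph of $G$ is dependent. Any dependency ``supported on a bounded configuration around $v$'' must therefore use non-edges of $G$ (e.g.\ edges of the auxiliary $K_4$'s), and you give no way to transfer it back to $G$. Deleting a degree-$3$ vertex also does not preserve dependence; only deleting vertices of degree at most $2$ does. So ``peeling'' cannot shrink $G$ to $6$ vertices.

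More decisively, take the symmetric $2$-rigidity matroid on $\binom{[6]}{2}$ from Section~\ref{sec:D}, whose bases are the $\R_6$-bases other than copies of the triangular prism. Its restriction to any $5$ vertices is $\R_5$, and every Laman-sparse $6$-vertex graph with at most $8$ edges lies in a non-prism base. Also, $\Ktt$ is independent in it. So it satisfies everything your Step 3 uses: rank $2n-3$, $K_4$-circuits, symmetry, $0$-extensions, and minimality with respect to smaller graphs. Yet the prism is a dependent $1$-extension of an independent graph there. Hence any argument living in $\M_{v(G)}$ and below must fail. In particular, your proposed rank count in $\M_6$ showing the prism independent cannot exist: the prism's independence in every family (Theorem~\ref{thm:cubic}) necessarily uses $\M_7$.

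The missing idea is to use heredity \emph{upward}, via twins (Lemma~\ref{lem:tripling}). Let $C$ be an $\M$-circuit on $[n]$, let $v$ be a degree-$3$ vertex, and let $n+1$ be a new twin of $v$. By symmetry and heredity, $C$ and the copy with $v$ replaced by $n+1$ are both $\M_{n+1}$-circuits. Take an edge $e$ at a degree-$3$ vertex $w$ that is neither adjacent to nor a twin of $v$. Eliminating $e$ shows $(C\oplus v)-e$ is dependent. Since $w$ now has degree $2$, the $0$-extension property then shows the Zykov symmetrization $(C\oplus v)-w$ is dependent.

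Your minimal $G$ is exactly what the paper calls a short circuit. The paper chooses one minimizing $v(C)$, then $|C|$, then maximizing $tw_3(C)$. The key tool is Lemma~\ref{lem:nolamaninside}, built on Taylor's suppression result (Proposition~\ref{prop:suppr}) and on circuit elimination performed simultaneously in $\M$ and $\R$. With it, one shows first that $tw_3(C)\le 1$ when $\Ktt$ is not a circuit: a degree-$3$ vertex with two twins spans a $\Ktt$, which would have to be all of $C$. One then shows that $(C\oplus v)-w$ is again a short circuit with the same numbers of vertices and edges but larger $tw_3$, contradicting criticality. Your Steps 1 and 2 are fine, but without an argument of this kind, which genuinely uses $\M_{n+1}$, the proposal does not prove the theorem.
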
 
 \noindent
It is known that any irreducible algebraic plane curve $C$ of degree at least $2$ gives rise to a $2$-rigidity family $\M(C)$ via the row matroids of the rigidity matrix for generically chosen points on $C$. As we have mentioned, when $C$ is a conic, we have $\M(C)=\h$. Theorem~\ref{thm:Laman1} implies that if $K_{3,3}$ is infinitesimally rigid on $C$, then $\M(C)=\R$. 
A classical theorem of Bolker and Roth~\cite{BR80} states that this is always the case when the points are not on a conic. Thus we obtain
\begin{cor}\label{cor:curves}
For every irreducible algebraic plane curve $C$ of degree $d \geq 2$ we have
$$\M(C)=\begin{cases}
	\h, & \text{if $d=2$}\\
	\R, & \text{if $d\geq 3$}.
\end{cases}
$$
\end{cor}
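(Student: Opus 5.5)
The plan is to treat the two cases separately. The conic case is essentially already in hand, and the case of degree at least $3$ reduces, via Theorem~\ref{thm:Laman1}, to checking one graph, $K_{3,3}$, on generic points of $C$.

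\emph{Degree $2$.} Here $C$ is an irreducible, hence non-degenerate, conic. As recalled above, $\h_n$ is the infinitesimal rigidity matroid of $n$ generic points on any non-degenerate conic. So $\M(C)=\h$ directly, and nothing further needs to be proved.

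\emph{Degree $d\geq 3$.} By the discussion preceding the corollary, $\M(C)$ is a $2$-rigidity family. Suppose for contradiction that $\M(C)\neq\R$. Then Theorem~\ref{thm:Laman1} says $K_{3,3}$ is an $\M(C)$-circuit, and in particular $\M(C)$-dependent. Since $K_{3,3}$ has $9=2\cdot 6-3$ edges, this means the following. For generic points $\mathbf{p}_1,\dots,\mathbf{p}_6\in C$, the rigidity matrix of $K_{3,3}$ has a nonzero row dependency, i.e.\ a nonzero self-stress. Equivalently, the framework is infinitesimally flexible. It therefore suffices to exhibit \emph{one} configuration of six points on $C$ at which $K_{3,3}$ is infinitesimally rigid. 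This is enough because the rank of the rigidity matrix is lower semicontinuous: the set of configurations in $C^6$ where the $9\times 9$ determinant is nonzero is Zariski open, so it is nonempty only if it contains the generic configuration.

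To find such a configuration I would invoke the Bolker--Roth theorem~\cite{BR80}. In the form needed, it says: for six points in general position (no three collinear within the relevant parts), the framework $K_{3,3}$ carries a nonzero stress if and only if the six points lie on a common conic. So it remains to find six points of $C$ in general position that are not on a conic.

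\begin{enumerate}
\item Since $C$ is irreducible of degree at least $3$, it contains no line. Hence three generic points of $C$ are not collinear; collinearity is a proper closed condition on $C^3$.
\item Five generic points of $C$, no three collinear, determine a unique conic $Q$.
\item If a generic sixth point of $C$ also lay on $Q$, then $C\cap Q$ would be Zariski dense in $C$. Irreducibility of $C$ would then force $C\subseteq Q$, contradicting $\deg C\geq 3$. This is also where B\'ezout's theorem enters: $C\cap Q$ has at most $2d$ points unless $C\subseteq Q$.
\end{enumerate}

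Thus six generic points of $C$ are in general position and off every conic. By Bolker--Roth, $K_{3,3}$ is infinitesimally rigid at them, hence $\M(C)$-independent. This contradicts the assumption, so $\M(C)=\R$.

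The main obstacle I expect is purely technical: making the notion of ``generic points on $C$'' precise when $C$ is a real curve. One has to ensure that the real points of $C$ are Zariski dense in $C$, for instance by restricting to curves with infinitely many real points or by working over $\mathbb{C}$, so that the open conditions above can be realized. One also has to check that the general-position hypotheses in Bolker--Roth are themselves open conditions satisfied generically on $C$.
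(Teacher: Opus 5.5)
Your proposal is correct and follows the paper's argument. The conic case is the known identification $\M(C)=\h$. For $d\geq 3$, Bolker--Roth makes $K_{3,3}$ infinitesimally rigid, hence $\M(C)$-independent, at generic points of $C$, and Theorem~\ref{thm:Laman1} then forces $\M(C)=\R$.

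Your extra genericity details (B\'ezout, and the Zariski-openness of the rank condition) only flesh out what the paper leaves implicit. One small correction: the openness condition should be stated for some $9\times 9$ minor of the $9\times 12$ rigidity matrix, not for ``the'' $9\times 9$ determinant.
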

\noindent
Kalai (\cite{Kal85}, Problem 10.4) asked if every $3$-connected cubic planar graph is a circuit in some $2$-rigidity family. Using a similar idea as in the proof of Theorem~\ref{thm:Laman1}, we reach the diametrically opposite conclusion, even without the planarity assumption.
\begin{theorem}\label{thm:cubic}
Every 
connected cubic graph $H\neq K_4,K_{3,3}$ is independent in every $2$-rigidity family.
\end{theorem}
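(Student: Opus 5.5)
The plan is to argue by contradiction: use a degree lemma to show that a dependent $H$ must itself be a circuit, then glue two copies of $H$ and apply circuit elimination and rank counting.

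\textbf{Step 1: the degree lemma.} In any abstract $2$-rigidity matroid $\M_n$, adding a new vertex of degree at most $2$ to an independent graph keeps it independent. This is the standard $0$-extension property, which follows from the rank condition and $K_4$ being a circuit. Consequently, no $\M$-circuit has a vertex of degree at most $2$, so every circuit has minimum degree at least $3$.

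\textbf{Step 2: a dependent $H$ is a circuit.} Let $H$ be a connected cubic graph on $n$ vertices. Suppose $H$ is $\M$-dependent and let $C\subseteq H$ be a circuit. By Step 1, $C$ has minimum degree at least $3$. Every vertex of $H$ has degree exactly $3$, so each vertex touched by $C$ has all three of its $H$-edges in $C$. By connectivity, $C=H$. So the only case to exclude is that $H$ itself is an $\M$-circuit. This forces $r(H)=3n/2-1$.

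\textbf{Step 3: gluing two copies.} Fix an edge $e$ of $H$ that is not a bridge. Choose a second copy $H'=\sigma(H)$ inside a large $\M_N$, which the matroidal property allows. Let $F=H\cap H'$ with $e\in F$, where $F$ is independent, being a proper subgraph of the circuit $H$. Submodularity gives
\[
r(H\cup H')\le r(H)+r(H')-r(F)=3n-2-|F|.
\]
Circuit elimination on $e$ yields a circuit
\[
C\subseteq (H\cup H')-e .
\]
By Step 1, $C$ has minimum degree at least $3$. We want the overlap $F$ chosen so that this degree condition forces $C$ to be the whole of $(H\cup H')-e$. The mechanism is the same as in Step 2: a cubic vertex outside the overlap propagates membership along $H-e$, which is connected. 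Then compare
\[
|C|=|H\cup H'|-1
\]
with both $r(H\cup H')+1$ and $2v(H\cup H')-2$.

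\textbf{Main obstacle: choosing the overlap.} A quick check shows that gluing along a single edge, or along a short path, makes the submodular bound exactly tight, so it gives no contradiction. This is unavoidable in that regime, since $K_4$ is genuinely a circuit. The contradiction must therefore come from the vertex bound $2v-3$. Writing $k=|V(H)\cap V(H')|$, we need $|F|<2k+1-n$. So the copies should share many vertices but few edges, for instance $\sigma$ a permutation of $V(H)$ fixing $e$ so that $H\cap\sigma H$ is sparse.

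In that regime the forcing argument of Step 2 no longer applies verbatim. The crux is to find, for every connected cubic $H\neq K_4,K_{3,3}$, a relabelling $\sigma$ such that $(H\cup\sigma H)-e$ has no proper subgraph of minimum degree $\geq3$ that could carry a circuit. Alternatively, every such subgraph should be shown independent by an inductive appeal to Step 1.

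I would first try $\sigma$ swapping the endpoints of two far-apart non-adjacent edges, and use a local case analysis. I expect this is exactly where $K_4$ and $K_{3,3}$ drop out: they are too small and too symmetric for such a $\sigma$ to exist. This matches $K_{3,3}$ being a circuit in $\h$, and the argument should mirror the one used for Theorem~\ref{thm:Laman1}.
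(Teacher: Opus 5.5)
Your Steps 1 and 2 are correct; Step 2 is the paper's Lemma~\ref{lem:subcubic}(3). Step 3 has a gap, though, and no choice of $\sigma$ can close it, because the final count can never yield a contradiction.

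First consider the rank comparison. Since $H,H'$ are circuits and $F$ is independent, if $C=(H\cup H')-e$ were a circuit we would have $r(C)=|C|-1=|H\cup H'|-2=r(H)+r(H')-r(F)$. So the submodular bound is simply attained, and it gives nothing.

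Now consider the vertex bound. As you noticed, it is the only possible source of a contradiction, and it requires $|F|<2k+1-n$. In exactly that regime $|(H\cup H')-e|=3n-|F|-1\ge 2v(H\cup H')-1$. But by Proposition~\ref{prop:lamanmax}, a graph on $m$ vertices has rank at most $2m-3$ in every $2$-rigidity matroid. Hence $(H\cup H')-e$ has nullity at least $2$: it is dependent and never a circuit, for every $H$ and every $\sigma$, whether or not $H$ is an $\M$-circuit.

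Consequences:
\begin{itemize}
\item Wherever your count would bite, the $\sigma$ you are looking for does not exist. $(H\cup\sigma H)-e$ always contains a circuit that is a \emph{proper} subgraph of minimum degree at least $3$.
\item Circuit elimination extracts no information from the hypothesis there, since dependence is automatic.
\item Your concrete candidate, a $\sigma$ moving only four vertices, fixes all but at most $12$ edges. So $|F|\ge 3n/2-12>n$ once $n>24$, which lands in the tight regime anyway.
\end{itemize}
This is also not the mechanism by which $K_4$ and $K_{3,3}$ drop out.

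The missing idea is to derive the contradiction from a \emph{proper} circuit inside the glued graph, and this requires a minimal counterexample. The paper's route is as follows.
\begin{itemize}
\item Take $H$ with $v(H)$ minimum; $H$ is a circuit by your Step 2.
\item Show $H$ is $3$-connected. Circuits are $2$-connected, and a $2$-separation $\{u,w\}$ would make $X+uw$ a circuit, although it is connected and properly subcubic.
\item Pick a vertex $v$ with no twins and an edge $e=w_1w_2$ disjoint from $v$ and its neighbours. Such $v$ and $e$ exist exactly when $H\neq K_4,K_{3,3}$.
\item Glue $H$ with the copy in which $v$ is replaced by a new twin. Eliminating $e$ makes $G=(H\oplus v)-e$ dependent (Lemma~\ref{lem:tripling}).
\item In $G$ the vertices $w_1,w_2$ have degree $2$, and deleting a degree-$2$ vertex preserves dependence. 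Do this repeatedly along three internally disjoint $w_1$--$v$ paths given by Menger's theorem. This leaves a dependent graph of maximum degree $3$.
\item Its circuit is connected and cubic, has at most $v(H)-1$ vertices, and is neither $K_4$ nor $K_{3,3}$, since $H-e$ contains neither and $v$ has no twin. This contradicts minimality.
\end{itemize}
Note that the paper's glued graph shares all but one vertex with $H$, so it sits in the \emph{tight} regime you discarded. The contradiction comes from the structure of the circuit found inside it, not from counting.
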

\noindent
Note that $K_4$ is by definition always a circuit and $K_{3,3}$ is by Theorem~\ref{thm:Laman1} independent in $\R$ and a circuit otherwise. Thus our results give a complete classification of cubic graphs with respect to $2$-rigidity families.

In particular, Theorem ~\ref{thm:cubic} implies that every such graph $H$ is $\h$-independent. Using Bernstein's theorem (Theorem~\ref{thm:Bernorig} below) we obtain the following graph theoretical corollary, which may have a direct combinatorial proof, but we could not see one quickly. 
\begin{cor}\label{cor:cubicorient}
Every connected cubic graph $H\neq K_4,K_{3,3}$ has an orientation without directed or alternating cycles.
\end{cor}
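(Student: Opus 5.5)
The plan is to combine Theorem~\ref{thm:cubic} with Bernstein's characterization of $\h$-independence (Theorem~\ref{thm:Bernorig}, which the text above cites). $\h$ is a $2$-rigidity family: each $\h_n$ has rank $2n-3$, and the family is symmetric and hereditary. So Theorem~\ref{thm:cubic} applies with $\M=\h$, and every connected cubic graph $H\neq K_4,K_{3,3}$ is $\h$-independent. Bernstein's theorem, in the necessity direction (the direction proved by tropical methods), then gives an edge ordering of $H$ with no directed cycles and no alternating closed trails.

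The remaining work is to turn this edge ordering into a vertex orientation, which I expect is how Theorem~\ref{thm:Bernorig} is phrased. In Bernstein's setup, the $\h$-independent graphs are those admitting an orientation for which there is no directed cycle and no alternating cycle. Here a cycle is alternating if consecutive edges alternate between meeting at a common head and at a common tail. If the theorem is stated in terms of closed trails rather than cycles, I would restrict to closed trails that are cycles; this is a weaker requirement, so the corollary's condition holds automatically.

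The main obstacle is bookkeeping: matching the corollary's wording ("orientation without directed or alternating cycles") to the exact form of Theorem~\ref{thm:Bernorig}. Any version in which $\h$-independence implies the existence of such an orientation suffices, since cycles are special closed trails. I would also check that no parity or length condition on alternating cycles is lost in this translation; alternating cycles automatically have even length. With that settled, the corollary follows in one line from Theorem~\ref{thm:cubic} and Theorem~\ref{thm:Bernorig}.
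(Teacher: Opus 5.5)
Your proposal is correct and is the paper's argument: apply Theorem~\ref{thm:cubic} with $\M=\h$ to get that $H$ is $\h$-independent, then use the necessity direction of Theorem~\ref{thm:Bernorig} to obtain a Bernstein orientation. Its lack of alternating closed trails in particular excludes alternating cycles. The ``edge ordering'' you worry about converting is just an orientation, since Theorem~\ref{thm:Bernorig} is phrased directly in terms of Bernstein-orientability, so no conversion step is needed.
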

\subsection{Combinatorial properties of $\h_n$}
The matroids $\h_n$ are not as well understood combinatorially as $\R_n$. Despite this, Bernstein~\cite{Ber17}, using tools from tropical geometry, gave a surprising combinatorial characterization of $\h_n$-independent graphs. 

An \emph{alternating closed trail} in an oriented graph is a cyclic sequence of distinct arcs $e_0,\dots,e_{2m}=e_0$ such that, modulo $2m$, for each $i$ the arcs $e_{i}$ and $e_{i+1}$ share a vertex $v_i$ and have the same direction with respect to it, and for each $i$ the vertices $v_i$ and $v_{i+1}$ are distinct. Note that non-consecutive vertices $v_i$ and $v_j$ may coincide.

Let us call an oriented graph~\emph{Bernstein} if it has no directed cycles and no alternating closed trails. Call an undirected graph \emph{Bernstein-orientable} if it admits such an orientation.
\begin{theorem}[\cite{Ber17}]\label{thm:Bernorig}
	$G\subseteq \binom{[n]}{2}$ is $\h_n$-independent if and only if it is Bernstein-orientable.
\end{theorem}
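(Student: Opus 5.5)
The plan is to prove the sufficiency direction (Bernstein-orientable implies $\h_n$-independent) by a purely combinatorial leading-term argument, which also makes it independent of the characteristic of the field. For the necessity direction I would not try to reprove anything and simply rely on Bernstein's tropical argument~\cite{Ber17}. So fix a Bernstein orientation $\vec G$ of $G$ with edge set $E$. It suffices to exhibit a set $C$ of $|E|$ columns of $M_n$ such that the $E\times C$ minor is a nonzero polynomial in $r_1,\dots,r_n,b_1,\dots,b_n$. Since the rows only involve the variables linearly and homogeneously, we may dehomogenize and set $r_i=1$ for all $i$. Then each row $\{i,j\}$ has entries $(1,b_j)$ in the two columns of $i$ and $(-1,-b_i)$ in the two columns of $j$.

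By Leibniz, the minor is a signed sum over bijections $\sigma\colon E\to C$. Each term is, up to sign, a monomial in the $b$'s. An arc $u\to v$ placed into a $b$-column of its tail contributes $b_v$; placed into a $b$-column of its head it contributes $b_u$; placed into an $r$-column it contributes $\pm1$. I would use the orientation to choose $C$ and a canonical bijection $\sigma_0$. The guiding rule is to send each arc to a column of its tail, preferring the $b$-column and otherwise the $r$-column. When a vertex would receive too many arcs, the overflow is pushed along the acyclic order.

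Then I would introduce a weighting. Take a topological order of $\vec G$, which exists because there are no directed cycles, and specialize $b_i=t^{\lambda_i}$ with rapidly increasing integers $\lambda_i$ that follow this order. The goal is to show that $\sigma_0$ is the \emph{unique} bijection attaining the minimal $t$-degree. Then the lowest coefficient of the minor is $\pm1$, hence nonzero over every field.

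Uniqueness is the heart of the argument and the step I expect to be the main obstacle. Suppose $\sigma\neq\sigma_0$ also attains the minimum degree. Consider the symmetric difference of $\sigma$ and $\sigma_0$ as matchings in the bipartite graph between arcs and columns; it decomposes into cycles. Following such a cycle, consecutive arcs share a vertex, namely the vertex owning the shared column. I would show that whenever two consecutive arcs at a shared vertex $v_i$ point in opposite directions relative to $v_i$, the choice of $\lambda$ forces a strict degree increase, using acyclicity and the topological order. Hence at a minimizer all consecutive pairs have the same direction relative to the shared vertex. The cycle then projects to an alternating closed trail in $\vec G$, or, in the degenerate cases, to a directed cycle; either contradicts the Bernstein property. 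The delicate points are two. First, the condition $v_i\neq v_{i+1}$ must hold, which is exactly where we use that the two columns of a single vertex are both present. Second, the choice of $C$ and the growth rate of $\lambda$ must be arranged so that every cycle in the symmetric difference really changes the degree unless it is alternating. If the direct weighting fails, I would first try an inductive alternative: delete a source vertex of $\vec G$ together with its out-arcs and extend independence by a row-reduction argument in which the absence of alternating trails controls the new pivots.
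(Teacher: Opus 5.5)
There is a genuine gap, and it sits exactly in the step you defer. The overall frame is fine. Setting $r_i=1$ is harmless, since each minor is homogeneous in each pair $(r_k,b_k)$. Relying on \cite{Ber17} for necessity is also what the paper does.

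\textbf{The uniqueness step is neither specified nor correct.} ``Overflow is pushed along the acyclic order'' does not define $C$ and $\sigma_0$. You also never show that an assignment with one arc per column exists. That already requires $|E(H)|\le 2v(H)$ for all subgraphs $H$, which you would have to derive from the Bernstein property. More seriously, the local principle that an opposite-direction junction forces a strict degree increase is false. Take the arcs $(1,2),(1,3),(1,5),(3,4),(3,5)$ with $\lambda_1\ll\dots\ll\lambda_5$; this is Bernstein, since its only cycle is a triangle.
\begin{itemize}
\item Your rule gives, say, $(1,2)\mapsto b_1$, $(1,3)\mapsto r_1$, $(3,4)\mapsto b_3$, $(3,5)\mapsto r_3$, with the overflow $(1,5)\mapsto b_5$. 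Its degree is $\lambda_1+\lambda_2+\lambda_4$.
\item The bijection $(1,2)\mapsto b_1$, $(1,5)\mapsto r_1$, $(1,3)\mapsto b_3$, $(3,4)\mapsto r_3$, $(3,5)\mapsto b_5$ has strictly smaller degree $\lambda_1+\lambda_2+\lambda_3$.
\item Yet the exchange cycle between them has an opposite-direction junction at column $b_3$. It also has two consecutive junctions at vertex $3$, so it does not even project to a trail with $v_i\neq v_{i+1}$.
\item Sending the overflow to $r_5$ instead fails differently: it creates a red directed triangle $1\to3\to5\to1$, so the monomial of $\sigma_0$ is not even unique.
\end{itemize}
The sign of the degree change along an exchange cycle is a global quantity, and no junction-by-junction condition controls it. Also, a cycle that is alternating at every junction cannot degenerate into a directed cycle, so that part of the sketch does not hold together.

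\textbf{The missing idea is that no extremality is needed.} With the columns fixed, the monomial of a Leibniz term determines the whole degree function $f$ of the corresponding configuration. The out-degrees come from the columns, the blue in-degrees from the exponents, and the red in-degrees from $\deg_G$. So the monomial of a single \emph{recoverable} configuration with $\Delta^+\le 1$ in each colour occurs exactly once, hence with coefficient $\pm1$ in every characteristic. The paper builds such a configuration in three steps.
\begin{itemize}
\item The bipartite graph $F$ with an edge $a_i^+a_j^-$ for each arc $(i,j)$ is a forest, because a cycle in $F$ is an alternating closed trail of $D$. Orienting $F$ with out-degree at most $1$ sends each arc either to its tail (blue) or to its head (red), with at most one of each per vertex. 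This is what guarantees the assignment exists.
\item $(R,B)$ is recoverable by Proposition~\ref{prop:colourblind} and Lemma~\ref{lem:sameorientdiffcols}, since $R\cup B=D$ is Bernstein.
\item Reversing all red arcs is a bijection on configurations that preserves equality of degree functions. Hence $(r(R),B)$, which is an actual Leibniz term, is recoverable too.
\end{itemize}
Red arcs sit at heads, not tails as in your rule, and the reversal trick replaces your ad hoc overflow rule. A weighting argument would additionally have to show that such a configuration is a vertex of the Newton polytope of the minor with a unique preimage. That is strictly stronger than what is needed, and nothing in your sketch establishes it.
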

\noindent
Remarkably, no combinatorial proof of Theorem~\ref{thm:Bernorig} is known to date. However, for bipartite graphs Brakensiek, Dhar, Gao, Gopi and Larson~\cite{Bra24} gave recently a combinatorial proof of the sufficiency direction and extended Bernstein's theorem to positive characteristic. 

Let $p$ be prime. Define $\h_n(p)$ to be the counterpart of $\h_n$ in characteristic $p$. That is, consider the matrix $M_n$ as in Definition~\ref{def:hyperconn} but using $\mathbb{F}_p$ in place of $\mathbb{Q}$, and let $\hp$ be its row matroid (by convention, let us write $\h_n(0)=\h_n$).
\begin{theorem}[\cite{Bra24}]
	Every Bernstein-orientable bipartite graph $G\subseteq \binom{[n]}{2}$ is $\hp$-independent for all $p$.
\end{theorem}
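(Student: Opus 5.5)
We want: a bipartite graph $G$ with a Bernstein orientation is $\hp$-independent for every $p$, i.e.\ the rows of $M_n$ indexed by the edges of $G$ are linearly independent over $K_p=\mathbb{F}_p(r_1,\dots,r_n,b_1,\dots,b_n)$. The plan is to exhibit a nonzero maximal minor of the $|E(G)|\times 2n$ submatrix $M_G$. We will pick a specific choice of columns and a monomial order on the variables, and show that the resulting $|E|\times|E|$ minor has a leading monomial with coefficient $\pm1$. Since that coefficient is $\pm 1$ in every characteristic, the minor is nonzero over every $\mathbb{F}_p$ as well as over $\mathbb{Q}$.

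\textbf{Step 1: column selection from the orientation.} Let $G$ be bipartite with sides $A,B$, and fix a Bernstein orientation. Row $\{i,j\}$ has entries $r_j,b_j$ in the columns of $i$ and $-r_i,-b_i$ in the columns of $j$.

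\begin{itemize}
\item The orientation assigns each arc $u\to v$ to its tail $u$, say. Each vertex $u$ has two columns, $r$-type and $b$-type. Within the arcs leaving $u$ (resp.\ entering $u$), the intended assignment uses the $r$-column for arcs leaving and the $b$-column for arcs entering.
\item Bipartiteness lets us break the symmetry between the two sides. We give vertices of $A$ the "$r$" role and vertices of $B$ the "$b$" role, so that every edge contributes a monomial of the form $r_a$ or $b_b$ placed in a column of the opposite endpoint.
\item The flaw is that a vertex may have many incident arcs but only two columns. So the assignment cannot be arc-to-own-column. Instead we use the Laplace/Cauchy–Binet expansion: $\det$ of a square submatrix equals a signed sum over bijections between rows and columns.
\end{itemize}

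\textbf{Step 2: the matching and the monomial.} Independence of the rows is equivalent to the existence of a maximal minor that is nonzero as a polynomial. Expanding the minor gives a sum over matchings from edges to columns: edge $\{i,j\}$ is matched to a column of $i$ or of $j$, and contributes the variable of the other endpoint. Such a matching is a choice, for every edge, of an endpoint "charged" (at most two edges per vertex, one per column type), together with a monomial $\prod x_{\text{other endpoint}}$. We will use the acyclic orientation to define a linear (topological) order, and take the lexicographically extreme monomial with respect to that order. The goal is to show that exactly one matching realizes it.

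\textbf{Step 3: uniqueness via Bernstein's conditions.} Two matchings giving the same monomial differ by an exchange. The symmetric difference of the two matchings decomposes into closed walks in which consecutive edges share a charged vertex and have the same variable type. On a bipartite graph with our $r/b$ side convention, such a closed walk is exactly an alternating closed trail, or a directed cycle if the exchange reverses charge along the walk. Bernstein's conditions forbid both, so the leading monomial arises from a unique matching. Its coefficient is therefore $\pm1$, and the minor is nonzero over every $\mathbb{F}_p$.

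The main obstacle is Step 3. We must make precise the extremal choice of monomial so that every cancelling alternative matching genuinely produces a closed trail with $v_i\neq v_{i+1}$, rather than degenerate back-and-forth exchanges. The first thing to try is to charge each arc to its head, order vertices by a topological order, and use induction: delete a source (or sink) vertex $v$ with its at most two relevant arcs, and argue that $v$'s two columns must be matched to those arcs in any leading term.
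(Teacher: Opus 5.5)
There is a genuine gap. The step that actually needs the Bernstein hypothesis, your Step 3, is never carried out, and the framework you set up for it cannot work.

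First, the bipartite convention of Step 1, on which Step 3 relies, is far too restrictive. If every edge $\{a,b\}$ with $a\in A$, $b\in B$ contributes $r_a$ or $b_b$, then $r_a$ sits in the $r$-column of $b$ and $b_b$ in the $b$-column of $a$. So only $n$ of the $2n$ columns are ever used, and each vertex is charged at most once. A term therefore exists only if $|E(G)|\le n$. Even then, the terms are orientations of $G$ with out-degrees at most $1$, and reversing a cycle gives a second term with the same columns and monomial. So a coefficient $\pm1$ is only possible when $G$ is a forest.

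Bernstein-orientable bipartite graphs are much denser than this. Take $K_{2,m}$ with parts $\{x,y\}$ and $[m]$, oriented $x\to i\to y$. It has no directed cycle and no alternating closed trail: each $i$ has one in-arc and one out-arc, so all pivots of an alternating trail lie in $\{x,y\}$, and consecutive pivots would need an arc joining $x$ and $y$. Yet it has $2m>m+2$ edges. The same example defeats your fallback. Here $y$ is a sink with $m\ge 3$ incoming arcs but only two columns, so you can neither charge every arc to its head nor delete a sink "with its at most two relevant arcs".

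Second, and more fundamentally, Bernstein's conditions are properties of the single orientation $D$. The exchange walks in Step 3, however, are directed or alternating with respect to the orientations induced by the two competing permutation terms, and you never relate these to $D$. So the hypothesis is never actually used. A topological order of $D$ encodes only acyclicity, and acyclicity cannot suffice: $K_{3,3}$ is bipartite, has acyclic orientations, and is an $\mathcal{H}$-circuit, so all its maximal minors vanish.

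The missing idea, which is how the paper proves this statement (in fact without bipartiteness), is to choose the term so that its orientation \emph{is} $D$ after reversing one colour class.
\begin{itemize}
\item Charge each arc $(i,j)$ of $D$ either to its tail $i$, using the $b$-column of $i$ (entry $\pm b_j$), or to its head $j$, using the $r$-column of $j$ (entry $\pm r_i$). Every vertex must be charged at most once as a tail and at most once as a head.
\item Such a choice is exactly an orientation with out-degrees at most $1$ of the auxiliary bipartite graph $F$. This graph has vertices $a_i^+$, $a_j^-$ and one edge $a_i^+a_j^-$ per arc $(i,j)$ of $D$.
\item The orientation exists because $F$ is a forest: a cycle in $F$ would yield an alternating closed trail in $D$.
\end{itemize}
Your first bullet in Step 1 was heading this way. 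The fix is to decide tail versus head arc by arc, subject to the at-most-once constraint, rather than by direction at a vertex.

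Uniqueness then follows by undoing the reversal. A competing term with the same columns and monomial yields a configuration with the same coloured in- and out-degrees as $(R,B)$, where $R\cup B=D$. Acyclicity of $D$ forces the same orientation (Proposition~\ref{prop:colourblind}). The absence of alternating closed trails then forces the same colouring (Lemma~\ref{lem:sameorientdiffcols}). Note that the no-alternating-trail condition is used twice: once for existence of the term and once for its uniqueness.
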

\noindent
We give a combinatorial proof and a similar strengthening of the full sufficiency direction.
\begin{theorem}\label{thm:bernsteincharp}
Every Bernstein-orientable graph $G\subseteq \binom{[n]}{2}$ is $\hp$-independent for all $p$.  	
\end{theorem}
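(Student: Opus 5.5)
The plan is to show directly that the rows of $M_n$ indexed by a Bernstein-oriented graph $G$ are linearly independent over $\mathbb{F}_p(r_1,\dots,r_n,b_1,\dots,b_n)$, in every characteristic at once. It suffices to exhibit a $|E(G)|\times|E(G)|$ minor of the submatrix $M_G$ whose determinant, as a polynomial in the $r_i,b_i$, has some monomial with coefficient $\pm1$. Such a minor is nonzero over every prime field.

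First I would set up the combinatorics of a minor. Choosing a square submatrix and a nonzero term of its Leibniz expansion amounts to choosing, for each edge $e=\{u,v\}$, an endpoint $w(e)$ and a coordinate $c(e)\in\{r,b\}$, subject to two conditions. The pairs $(w(e),c(e))$ must be pairwise distinct, since they are the columns used. The term contributed by $e$ is $\pm c(e)_{x}$, where $x$ is the endpoint other than $w(e)$.

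So each such ``transversal'' $\tau$ yields the monomial $m(\tau)=\prod_e c(e)_{\bar w(e)}$, where $\bar w(e)$ denotes the other endpoint. Fixing the column set, the coefficient of $m(\tau)$ in the minor is the signed count of transversals on those columns producing the same monomial. The goal is therefore to construct, from a Bernstein orientation, one transversal $\tau_0$ that is the \emph{unique} transversal on its column set with monomial $m(\tau_0)$. Then the coefficient is $\pm1$.

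To build $\tau_0$, I would use the orientation. Since the orientation is acyclic, fix a topological order of the vertices. Each edge $u\to v$ should select its column at one endpoint, dictated by the direction, with the coordinate chosen so that at every vertex the (at most two) edges selecting its block use different coordinates. I expect the natural rule to be the tropical one implicit in Bernstein's proof. Concretely, weight the variables as $r_i=t^{a_i}$ and $b_i=t^{-a_i}$ with $a_i$ increasing along the topological order. Then direction along an arc determines which coordinate is ``heavier'', and $\tau_0$ is the transversal maximizing the total weight. Equivalently, I would look for a monomial order under which $m(\tau_0)$ is the leading term. If the raw orientation does not keep the number of edges selecting each block at most two, I would first reduce to that case. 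The tool would be a Laman-type count, or induction by deleting a source or sink vertex of low degree, since deleting a vertex preserves the Bernstein property.

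The main obstacle is the uniqueness step. Suppose $\tau\neq\tau_0$ uses the same columns and the same monomial. Then the symmetric difference of the two choices decomposes into closed walks: an edge changing its column forces another edge to absorb that column, and equality of monomials forces the exchanged variables to match up. I would argue that such a closed exchange walk which preserves coordinates along arcs is a directed cycle, and one which swaps coordinates at each step is an alternating closed trail in the sense defined above; the condition that consecutive $v_i$ are distinct comes from the two coordinates at a vertex being different columns. Both are excluded by the Bernstein property, so $\tau=\tau_0$. The delicate part is making this correspondence exact, in particular handling walks that mix the two behaviours and repeated vertices. I would first try to rule out mixed walks with the weight argument: a mixed exchange would strictly increase the weight, contradicting the maximality of $\tau_0$. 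The pure cases would then be handled by the two forbidden configurations.
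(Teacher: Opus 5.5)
\textbf{There is a genuine gap: the construction of $\tau_0$ is missing, and the planned uniqueness step fails.} Your reduction itself is sound and matches the paper's. A nonzero Leibniz term on a fixed column set is a red/blue configuration in which each colour class has out-degree at most $1$: orient $e$ from $w(e)$ to $\bar w(e)$ and colour it $c(e)$. The columns record the coloured out-degrees and the monomial records the coloured in-degrees. So you need a configuration that is determined by its coloured degrees.

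\emph{Construction of $\tau_0$.} Any rule where the chosen endpoint is dictated by the direction alone (always the tail, or always the head) needs a Bernstein orientation of maximum out-degree (or in-degree) at most $2$. No reduction can provide one. Take $K_{2,4}$ with parts $\{a,b\}$ and $\{x_1,\dots,x_4\}$; it is Bernstein-orientable, via the order $x_1<a<x_2<x_3<b<x_4$. Two $x_i,x_j$ that are both sources of their arcs give the alternating closed trail $x_i\to a\leftarrow x_j\to b\leftarrow x_i$, and likewise for two sinks. Acyclicity forbids having both $a\to x_i\to b$ and $b\to x_j\to a$. Hence some $x_i$ satisfies, say, $a\to x_i\to b$, and no $x_j$ satisfies $b\to x_j\to a$. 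Then $a$ points to every $x_j$ except at most one source, so $\deg^+(a)\geq 3$ in every Bernstein orientation. By reversal, some in-degree is also at least $3$.

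The missing idea is to let the endpoint depend on the colour: blue arcs select their tail and red arcs their head. One then needs $\Delta^+(B)\leq 1$ and $\Delta^-(R)\leq 1$. The paper gets this by splitting each vertex $i$ into $a_i^+,a_i^-$ and joining $a_i^+a_j^-$ for every arc $i\to j$ of $D$. A cycle in this bipartite graph $F$ yields an alternating closed trail in $D$, so $F$ is a forest and has an orientation with out-degree at most $1$. The arc $i\to j$ is coloured blue or red according as it is oriented $a_i^+\to a_j^-$ or $a_j^-\to a_i^+$. The transversal is then $(r(R),B)$, with the red arcs reversed.

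\emph{Uniqueness.} Your weight argument cannot work: the weight of a transversal is a function of its monomial. A competitor $\tau$ with the same monomial therefore has exactly the weight of $\tau_0$, and maximality says nothing about it. No weights are needed; the paper argues in two stages.
\begin{enumerate}
\item If $(R',B')$ has the same coloured degrees as $(r(R),B)$, then $(r(R'),B')$ has the same coloured degrees as $(R,B)$.
\item Forgetting colours, $r(R')\cup B'$ has the same in- and out-degrees as the acyclic $D$, so it equals $D$. Otherwise the arcs of $D$ reversed in it would form a nonempty balanced subgraph, which contains a directed cycle.
\item Two colourings of $D$ with equal coloured degrees differ on a set of arcs in which every vertex has even in- and out-degree. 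There a maximal alternating walk closes into an alternating closed trail.
\end{enumerate}
This two-stage argument, first the orientation and then the colouring, is what disposes of your ``mixed'' exchanges.
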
	
\noindent
Observe that the determinants of the minors of $M_n$ are polynomials with integer coefficients. Hence, when considered in characteristic $p>0$, they are modulo $p$ reductions of their counterparts in characteristic $0$. Consequently, every $\hp$-independent graph is $\h_n$-independent. Combining this with Theorem~\ref{thm:Bernorig} and our Theorem~\ref{thm:bernsteincharp} proves that these matroids are identical. 
\begin{cor}\label{cor:samematroidallp}
	For all $p$ we have $\hp=\h_n$. 
\end{cor}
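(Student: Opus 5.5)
The corollary follows from two inclusions between the independent sets of $\hp$ and $\h_n$. Both matroids live on the same ground set $\binom{[n]}{2}$, so it suffices to show they have the same independent sets.

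\emph{First inclusion: every $\hp$-independent graph is $\h_n$-independent.} Take $G$ independent in $\hp$ with $|G|=k$. Then the $k\times 2n$ submatrix of $M_n$ formed by the rows of $G$, taken over $\mathbb{F}_p(r_1,\dots,b_n)$, has some nonzero $k\times k$ minor. That minor is a polynomial $f\in\mathbb{Z}[r_1,\dots,r_n,b_1,\dots,b_n]$, because the entries of $M_n$ are $0,\pm r_i,\pm b_i$. Its reduction modulo $p$ is the corresponding minor over $\mathbb{F}_p$, which is nonzero by assumption. Hence $f$ has a coefficient not divisible by $p$, so $f\neq 0$ in $\mathbb{Q}(r_1,\dots,b_n)$. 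Therefore the rows of $G$ are linearly independent over $\mathbb{Q}(r_1,\dots,b_n)$, that is, $G$ is $\h_n$-independent.

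\emph{Second inclusion: every $\h_n$-independent graph is $\hp$-independent.} Let $G$ be $\h_n$-independent. By Theorem~\ref{thm:Bernorig}, specifically its "only if" direction, $G$ is Bernstein-orientable. By Theorem~\ref{thm:bernsteincharp}, $G$ is then $\hp$-independent.

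Since the two matroids have the same independent sets, $\hp=\h_n$ for every prime $p$. The case $p=0$ is trivial by the convention $\h_n(0)=\h_n$.

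There is no real obstacle here: all the substance lies in Theorem~\ref{thm:bernsteincharp} and in Bernstein's necessity direction, which is proved tropically and is not reproved here. The only point needing care is the reduction argument in the first inclusion. We must use that a single minor is an integer polynomial whose mod-$p$ image is nonzero. This is a statement about polynomial identities in indeterminates, so no genericity subtleties arise.
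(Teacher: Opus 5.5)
Your proposal is correct and follows the paper's own argument. The first inclusion comes from reducing the integer-coefficient minors of $M_n$ modulo $p$. The converse uses the necessity direction of Theorem~\ref{thm:Bernorig} followed by Theorem~\ref{thm:bernsteincharp}.
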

\noindent
The wedge power matroid $W_n(r,p)$ is the linear matroid of the vectors $\{v_i\wedge v_j:1\leq i<j\leq n\}\subseteq \bigwedge V^2$ where $v_1,\dots,v_n$ are $n$ generic vectors in an $r$-dimensional vector space $V$ over an infinite field of characteristic $p$. Brakensiek et.~al.~\cite{Bra24} established a duality relation between the wedge power and hyperconnectivity matroids --- Lov\'asz's proof~\cite{Lovasz77} of the skewed version of Bollob\'as's two families theorem can be viewed a manifestation of this duality. In particular, it was shown in~\cite{Bra24} that the dual of $\hp$ is $W_n(n-2,p)$.  
Our result therefore implies that the latter does not depend on the field characteristic. 
\begin{cor}
	The wedge power matroid $W_n(n-2,p)$ does not depend on $p$. 
\end{cor}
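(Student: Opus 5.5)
The statement to prove is the last corollary: $W_n(n-2,p)$ does not depend on $p$. I would obtain it by combining the duality result of Brakensiek et al.~\cite{Bra24} with Corollary~\ref{cor:samematroidallp}. No new combinatorics is needed; the work is bookkeeping to make sure the two results talk about the same ground set and the same notion of characteristic.

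First, fix $p$ (prime or $0$). By~\cite{Bra24}, the dual matroid $(\hp)^*$ equals $W_n(n-2,p)$. Here both matroids live on the ground set $\binom{[n]}{2}$, with the edge $\{i,j\}$ identified with the vector $v_i\wedge v_j$.

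Second, invoke Corollary~\ref{cor:samematroidallp}, which gives $\hp=\h_n$ for every $p$. Matroid duality is a function of the matroid alone, so $(\hp)^*=(\h_n)^*$. Therefore $W_n(n-2,p)=(\h_n)^*$, and the right-hand side involves no $p$. Hence $W_n(n-2,p)=W_n(n-2,0)$ for all $p$, which is the claim.

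The only point I expect to need care is making sure the conventions agree. Definition~\ref{def:hyperconn} takes $\hp$ over $\mathbb{F}_p(r_1,\dots,b_n)$. The wedge power matroid is defined with generic vectors over an infinite field of characteristic $p$. I would check that the duality statement in~\cite{Bra24} is formulated for exactly this version of $\hp$ with generic points. I would also check that "generic" in both settings means the matroid is the same as for the rational function field. This holds because, over an infinite field, a generic choice of points realizes exactly the minors that are nonzero as polynomials. So the matroid depends only on $p$ and not on the particular infinite field, and the identification goes through.
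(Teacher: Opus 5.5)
Your proposal is correct and matches the paper's argument exactly: the paper combines the duality $(\hp)^*=W_n(n-2,p)$ from \cite{Bra24} with Corollary~\ref{cor:samematroidallp} to conclude that $W_n(n-2,p)=(\h_n)^*$ for every $p$. Your extra check, that generic points over any infinite field of characteristic $p$ give the same matroid as the rational function field, is a sound sanity check that the paper leaves implicit.
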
	
 \noindent
Since $\R$ and $\h$ are the only known examples of $2$-rigidity families, it is natural to ask whether further $2$-rigidity families exist. Note that our results close two potential avenues of constructing new $2$-rigidity families -- using algebraic curves, and changing the field characteristic.
They would not to give anything new by Corollaries~\ref{cor:curves} and~\ref{cor:samematroidallp}, respectively. Due to these considerations, as well as small graph experiments, we believe that no further families exist. 
 \begin{conj}\label{conj:only2}
 	There exist only two $2$-rigidity families: $\R$ and $\h$.
 \end{conj}

The rest of the paper is organized as follows. In Section~\ref{sec:H} we give a combinatorial proof of the sufficiency direction of Theorem~\ref{thm:Bernorig} and prove Theorem~\ref{thm:bernsteincharp}. In Section~\ref{sec:R} we collect some properties of matroidal families. In Sections~\ref{sec:U} and~\ref{sec:C} we prove Theorems~\ref{thm:Laman1} and \ref{thm:cubic}, respectively. We conclude in Section~\ref{sec:D} with a discussion.  
\subsection*{Notation.}
We associate graphs without isolated vertices with their edge sets. Consequently, for a graph $G$ let $|G|$ denote its number of edges. We denote by $V(G)$ the vertex set of $G$ and let $v(G)=|V(G)|$. We say ``$G$ contains vertex $v$'' if $v\in V(G)$ (equivalently, if $v$ is contained in an edge of $G$).
For a subset $U\subseteq V(G)$ we denote by $G[U]$ the induced subgraph of $U$. Let $\deg_G(v)$ be the degree of vertex $v$ in $G$. By $G-v$ we denote the graph $G$ with the vertex $v$ deleted. For an edge $e$ of $G$ we write $G-e$ to denote the graph $G$ with $e$ deleted. For a missing edge $e\in \binom{V(G)}{2}\setminus G$, $G+e$ denotes $G$ with $e$ added.  

When discussing oriented graphs, for a better distinction we speak of \emph{arcs} rather than edges. The out-degree and in-degree of a vertex $v$ in $D$ are denoted $\deg^+_D(v)$ and $\deg^-_D(v)$, respectively. Let $\Delta^+(D)$ and $\Delta^-(D)$ denote the largest out- and in-degree in $D$, respectively.

\subsection*{Statement on AI use} 
Work on this paper predates the wide-scale use of AI in mathematical research. Free versions of ChatGPT and Google Gemini were used for minor copy edits in the introduction. Otherwise this paper is AI-free.

\section{Hyperconnectivity}\label{sec:H} 
\noindent
In this section we prove Theorem~\ref{thm:bernsteincharp}. 

The \emph{degree sequence} $g(D)$ of an oriented graph $D$ is the tuple $g(D)=(\deg_D^+,\deg_D^-)$, where $\deg_D^+: V(D)\rightarrow \mathbb{Z}$ encodes the out-degrees of the vertices, and $\deg_D^-$ the in-degrees.
The following fact is well-known, but we include its short proof for completeness.
\begin{prop}\label{prop:colourblind}
Let $D$ be an acyclic orientation of an undirected graph $G$ and $D'$ an orientation of $G$ with $g(D')=g(D)$. Then $D'=D$.
\end{prop}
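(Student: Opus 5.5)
We argue by induction on the number of vertices of $G$, or equivalently by peeling off sources. Since $D$ is acyclic, it has a source, i.e.\ a vertex $v$ with $\deg_D^-(v)=0$. Because $g(D')=g(D)$, we also have $\deg_{D'}^-(v)=0$, so $v$ is a source of $D'$ as well. In both orientations every edge of $G$ incident to $v$ is therefore oriented away from $v$, and $D$ and $D'$ agree on all edges at $v$.

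Next we delete $v$. Let $G_1=G-v$, $D_1=D-v$ and $D_1'=D'-v$. Then $D_1$ is an acyclic orientation of $G_1$, being a suborientation of an acyclic one, and $D_1'$ is an orientation of $G_1$. The degree sequences of $D_1$ and $D_1'$ are obtained from those of $D$ and $D'$ in the same way: drop $v$, and decrease by one the in-degree of each out-neighbour of $v$. The out-neighbourhood of $v$ is the same in both, namely $N_G(v)$. Hence $g(D_1')=g(D_1)$.

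By the induction hypothesis $D_1'=D_1$, and together with the agreement on the edges at $v$ this gives $D'=D$. The base case is a graph with no edges (or a single vertex), where the statement is trivial. One small bookkeeping point is that deleting $v$ may create isolated vertices; the paper identifies graphs with edge sets, so we either allow isolated vertices during the induction or simply induct on the number of edges removed. If $v$ has no edges, it would not lie in $V(G)$ under the paper's convention, so the source we pick always has positive degree unless $G$ is empty.

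No step here is a real obstacle. The only point needing care is that the source condition transfers from $D$ to $D'$, and that is exactly what the equality of in-degrees provides.
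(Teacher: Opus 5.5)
Your proof is correct, but it takes a different route from the paper's.

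The paper argues globally by contradiction. It takes the set $\tilde D = D\setminus D'$ of arcs on which the two orientations disagree. Equality of degree sequences forces $\deg^+_{\tilde D}(v)=\deg^-_{\tilde D}(v)$ at every vertex. The paper then extracts a directed cycle from a longest directed path in $\tilde D$, contradicting acyclicity of $D$.

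You instead induct by peeling off a source. Equality of in-degrees transfers the source property from $D$ to $D'$, so the two orientations agree on the edges at $v$. Your bookkeeping for $g(D-v)=g(D'-v)$ is right: no arc enters $v$ in either orientation, so only the in-degrees of the neighbours of $v$ drop by one. The remark about isolated vertices is harmless either way.

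Your version amounts to a reconstruction procedure along a topological order. It uses only that acyclicity passes to subgraphs, and it gives an explicit recipe for recovering $D$ from $g(D)$.

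The paper's difference-graph argument is chosen because it carries over almost verbatim to Lemma~\ref{lem:sameorientdiffcols}. There the balanced (even) degrees of $(R\setminus R')\cup(B\setminus B')$ yield an alternating closed trail instead of a directed cycle. A source-peeling induction has no evident analogue in that bicoloured setting, since nothing like a source forces the colours locally.
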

\begin{proof}
Suppose for a contradiction that $D'$ is distinct from $D$. Consider $\tilde{D}=D\setminus D'$, i.e. the oriented graph of all arcs in $D$ that have the opposite orientation in $D'$. This is a non-empty subgraph of $D$ in which each vertex $v$ satisfies $$\deg_{\tilde{D}}^+(v)= \deg_{\tilde{D}}^-(v).$$ 
Take the longest directed path $\tilde{D}$: its last vertex must feed an arc back to the path, resulting in a directed cycle in $\tilde{D}$ and $D$ -- a contradiction. 
\end{proof}
\noindent
Let us extend this to bi-coloured orientations. 
\begin{lemma}\label{lem:sameorientdiffcols}
Suppose $D$ is an oriented graph that admits two distinct red/blue colourings (partitions) $(R,B)$ and $(R',B')$ such that for each vertex $v$,
$$\deg^+_R(v)=\deg^+_{R'}(v), \ \ \deg^-_R(v)=\deg^-_{R'}(v), \ \
\deg^+_B(v)=\deg^+_{B'}(v),  \text{ and} \ \deg^-_B(v)=\deg^-_{B'}(v).
$$
Then $D$ contains an alternating closed trail.
\end{lemma}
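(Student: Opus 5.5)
Consider the set $S=R\setminus R'=B'\setminus B$ of arcs whose colour differs between the two colourings. By assumption $S$ is non-empty. Split it as $S=S_1\cup S_2$, where $S_1=R\cap B'$ consists of arcs that are red in the first colouring and blue in the second, and $S_2=B\cap R'$ of arcs that are blue first and red second. The plan is to walk along arcs of $S$, switching between $S_1$ and $S_2$ at every step, and to show that such a walk always continues. Since the graph is finite, the walk must eventually close up, and the closed portion will be an alternating closed trail.

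The degree conditions provide local balance. Fix a vertex $v$. The arcs that are red in both colourings, $R\cap R'$, contribute equally to $\deg^+_R(v)$ and $\deg^+_{R'}(v)$, so we get $\deg^+_{S_1}(v)=\deg^+_{S_2}(v)$. The in-degrees give $\deg^-_{S_1}(v)=\deg^-_{S_2}(v)$ in the same way. So at every vertex, the $S_1$-arcs leaving $v$ can be paired with the $S_2$-arcs leaving $v$, and likewise for entering arcs. Fix such a pairing at every vertex and for both directions. Each arc $e=(u\to w)$ of $S$ then has a partner $\pi_u(e)$ among the out-arcs at $u$ of the opposite class, and a partner $\pi_w(e)$ among the in-arcs at $w$ of the opposite class. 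In both cases the partner has the same direction as $e$ with respect to the shared vertex, which is exactly what the definition of an alternating closed trail requires.

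Now build a transition system on $S$. Arriving at an arc $e$ through one endpoint, leave through the other endpoint $x$ and move to $\pi_x(e)$. This defines a bijection on pairs (arc, exit endpoint): if we move from $e$ to $f=\pi_x(e)$ at $x$, then we entered $f$ at $x$ and exit $f$ at its other endpoint. The inverse sends $f$, entered at $x$, back to $\pi_x(f)=e$. A bijection on a finite non-empty set has a cycle. Following that cycle gives a cyclic sequence of arcs $e_0,e_1,\dots$ in which consecutive arcs share a vertex $v_i$, have the same direction with respect to $v_i$, and consecutive shared vertices $v_i,v_{i+1}$ are the two distinct endpoints of $e_{i+1}$. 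The classes $S_1$ and $S_2$ alternate along the cycle, so its length is even.

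The main obstacle is distinctness of the arcs. Each arc has two endpoints, so it could be traversed twice by the bijection cycle, once in each direction. I would fix this with a pairing argument. Suppose arc $e$ occurs twice, traversed in opposite directions. Each occurrence has its predecessor and successor determined by the pairings at its endpoints. Traversed in reverse, the predecessor of the second occurrence is the successor of the first, and so on. So the whole cycle is its own reversal, and the two traversals are reflections of each other. Taking the segment between the two occurrences then gives a shorter closed sequence, or we can pick a minimal repeated arc and cut there. Alternatively, it is cleaner to take a shortest closed sequence satisfying the local conditions (one exists by the above) and argue that a repeated arc would allow shortening. I expect this step to need the most care.
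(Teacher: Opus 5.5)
\textbf{There is a genuine gap: you never show that a $T$-cycle uses each arc only once, and the fixes you sketch do not work as stated.} The rest of your set-up is sound. The balance $\deg^\pm_{S_1}(v)=\deg^\pm_{S_2}(v)$ is exactly the identity the paper derives. The map $T$ on pairs (arc, exit endpoint) is a bijection whose cycles are class-alternating closed alternating walks. (A small slip: the recoloured arcs form $(R\setminus R')\cup(B\setminus B')=S_1\cup S_2$, whereas $R\setminus R'=B'\setminus B$ is only $S_1$.)

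Let $\rho$ switch the exit endpoint of a state. One checks that $\rho T\rho=T^{-1}$. So if $s_i=T^i(s_0)$ and $s_k=\rho(s_0)$, then $\rho(s_i)=s_{k-i}$ for all $i$.
\begin{itemize}
\item In particular $e_{k-1}=e_1$, traversed in the opposite direction. Closing up the segment $e_1,\dots,e_{k-1}$ therefore puts the same arc twice in a row. The result is neither a shorter alternating closed trail nor a $T$-cycle to which your argument could be reapplied.
\item Taking ``a shortest closed sequence satisfying the local conditions'' is vacuous if those conditions omit arc-distinctness. An arc followed by itself reversed already satisfies the shared-vertex, same-direction and distinct-$v_i$ conditions.
\item If you include class alternation among the conditions, cutting at an oppositely traversed repeat destroys it. A version with a non-backtracking condition, cutting at a repeated opposite pair of minimal gap, does work, but that is not what you set up.
\end{itemize}

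The missing idea, within your framework, is to follow the reflection to its centre. Since $\rho$ has no fixed points, $2i\not\equiv k$ modulo the (even) cycle length for every $i$, so $k$ is odd. For $i=(k-1)/2$ this gives $T(s_i)=s_{i+1}=\rho(s_i)$. That means some arc $f$ at a vertex $y$ satisfies $\pi_y(f)=f$, which is impossible because partners lie in opposite classes. Hence every $T$-cycle is an alternating closed trail.

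The paper avoids this issue altogether. It fixes no pairing in advance and instead walks greedily through $\tilde D=S_1\cup S_2$, using each arc at most once. Since all in- and out-degrees of $\tilde D$ are even, the walk can only get stuck when it returns to its start $v$ along an out-arc of $v$. Distinctness of arcs is then built in; your pre-committed pairing is what created the obstacle.
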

\begin{proof}
Since $(R,B)$ and $(R',B')$ are partitions, we have
$$R'\setminus R=R'\cap B=B\setminus B'.
$$	
Let 
$$
\tilde{D}=(R\setminus R')\cup (B\setminus B'),
$$ 
i.e. the edges of different colour in $\C$ and $\C'$. By assumption, $\tilde{D}$ is not empty.
For every vertex $v\in V(\tilde{D})$ we have 
$$\deg^+_{R\setminus R'}(v)=\deg^+_{R}(v)-\deg^+_{R\cap R'}(v)=\deg^+_{R'}(v)-\deg^+_{R\cap R'}(v)=\deg^+_{R'\setminus R}(v)=\deg^+_{B\setminus B'}(v),  
$$
and so 
$$\deg^+_{\tilde{D}}(v)=\deg^+_{R\setminus R'}(v)+\deg^+_{B\setminus B'}(v)=2\deg^+_{R\setminus R'}(v),
$$
meaning each out-degree in $\tilde{D}$ is even, and the same holds for in-degrees analogously.

Now, consider an arbitrary vertex $v$ of non-zero out-degree in $\tilde{D}$. Take an outgoing arc $(v,u)$, and starting from it perform an alternating walk $v\rightarrow u \leftarrow \dots$, using distinct arcs, until this is no longer possible. By parity of the degrees, this will be the case when the walk returns to $v$, resulting in a closed alternating trail $\dots \leftarrow v \rightarrow u \leftarrow \dots$ in $\tilde{D}$ and $D$.
\end{proof}
\begin{definition}	
	Let $G$ be a graph without isolated vertices.
	A \emph{configuration} of $G$ is a partition (red/blue colouring) $\C=(R,B)$ of an  orientation of $G$.  
	The \emph{degree function} $f(\C)$ is the tuple 
	$$f(C)=(\deg^+_R,\deg^-_R,\deg^+_B,\deg^-_B),$$ 
	of functions encoding the in- and out-degrees in $\C$ of both colours. A configuration $\C$ of $G$ is \emph{recoverable} if there is no other configuration $\C'$ of $G$ with $f(\C')=f(\C)$.
\end{definition}	
\begin{lemma}\label{lem:Bernrec}
Suppose $\mathcal{C}=(R,B)$ is a configuration of a graph $G$ such that $D=R\cup B$ is Bernstein. Then $\C$ is recoverable.
\end{lemma}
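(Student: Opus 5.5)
The plan is to combine Proposition~\ref{prop:colourblind} and Lemma~\ref{lem:sameorientdiffcols}. The degree function determines each configuration's underlying orientation up to uniqueness, and it then determines the colouring.

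Suppose $\C'=(R',B')$ is another configuration of $G$ with $f(\C')=f(\C)$, and write $D'=R'\cup B'$. The first step is to compare the uncoloured orientations $D$ and $D'$. For every vertex $v$ we have
\[
\deg^+_{D'}(v)=\deg^+_{R'}(v)+\deg^+_{B'}(v)=\deg^+_{R}(v)+\deg^+_{B}(v)=\deg^+_{D}(v),
\]
and the same holds for in-degrees. Hence $g(D')=g(D)$. Since $D$ is Bernstein, it has no directed cycles, so it is an acyclic orientation of $G$, and $D'$ is an orientation of the same graph $G$. Proposition~\ref{prop:colourblind} therefore gives $D'=D$ as oriented graphs.

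The second step is to deal with the colourings. We now have two red/blue partitions $(R,B)$ and $(R',B')$ of the single oriented graph $D$, with the same coloured in- and out-degrees at every vertex. These are exactly the hypotheses of Lemma~\ref{lem:sameorientdiffcols}. If the partitions were distinct, that lemma would produce an alternating closed trail in $D$, contradicting that $D$ is Bernstein. So $(R',B')=(R,B)$, that is $\C'=\C$, and $\C$ is recoverable.

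The only point needing care is the reduction to a common orientation in the first step. A priori $\C'$ may orient some edges of $G$ differently from $\C$, and Lemma~\ref{lem:sameorientdiffcols} applies only to recolourings of a fixed oriented graph. This is resolved by summing the colour classes and invoking acyclicity through Proposition~\ref{prop:colourblind}. Each of the two Bernstein conditions is used exactly once: acyclicity fixes the orientation, and the absence of alternating closed trails fixes the colouring.
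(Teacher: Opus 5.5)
Your proposal is correct and is the paper's proof: summing the colour classes gives $g(D')=g(D)$, acyclicity forces $D'=D$ via Proposition~\ref{prop:colourblind}, and Lemma~\ref{lem:sameorientdiffcols} then rules out a distinct recolouring because $D$ has no alternating closed trail. The only difference is that the paper phrases this as a proof by contradiction, which is purely cosmetic.
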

\begin{proof}
Suppose for a contradiction that another configuration $\C'=(R',B')$ satisfies $f(\C')=f(\C)$. 
Unifying the colours, we obtain that $D'=R'\cup B'$ satisfies
$g(D')=g(D)$. Since $D$ is acyclic, by Proposition~\ref{prop:colourblind} we must have $D'=D$. So, $\C$ and $\C'$ satisfy the assumptions of Lemma~\ref{lem:sameorientdiffcols}, and from it we deduce that $D$ has an alternating closed trail, a contradiction.
\end{proof}
\begin{definition}
A graph $G$ is $UFP$ (uniquely forest partitionable) if there exists a recoverable configuration $\C=(R,B)$ of $G$ such that
$\max(\Delta^+(R),\Delta^+(B))\leq 1$.
\end{definition}
\noindent
Note that in $\C$ as above, $R$ and $B$ are directed pseudoforests. In order for a configuration to be recoverable, both must actually be forests, since reversing all orientations on a monochromatic directed cycle preserves the degree function. The following observation was made in \cite{Bra24}. 
\begin{prop}
Every $UFP$ graph is $\hp$-independent for all $p$.
\end{prop}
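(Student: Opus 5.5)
We take a UFP graph $G$ with a recoverable configuration $\C=(R,B)$ in which every vertex has out-degree at most $1$ in $R$ and at most $1$ in $B$. The goal is to show that the rows of $M_n$ indexed by $G$ are linearly independent over $K_p=\mathbb{F}_p(r_1,\dots,r_n,b_1,\dots,b_n)$. It suffices to find a $|G|\times|G|$ minor of the submatrix $M_n[G]$ whose determinant is a nonzero polynomial in the variables $r_i,b_i$ over $\mathbb{F}_p$.

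\textbf{Choice of columns.} Each vertex $v$ owns two columns, $r$-column $2v-1$ and $b$-column $2v$. In row $\{i,j\}$, the column of vertex $i$ carries the coordinates of $j$ up to sign, and vice versa. Each arc $(u,w)$ of $R$ is assigned to the $r$-column of its tail $u$, and each arc of $B$ to the $b$-column of its tail. Because out-degrees are at most $1$ in each colour, distinct arcs get distinct columns, so this gives a square submatrix $N$.

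\textbf{Expanding the determinant.} In the Leibniz expansion of $\det N$, a nonzero term is a bijection from the edges of $G$ to the chosen columns, with each edge $\{i,j\}$ sent to a column of $i$ or of $j$. Such a bijection is exactly a configuration $\C'$ of $G$: orient the edge away from the vertex whose column it uses, and colour it red or blue according to whether that column is an $r$- or $b$-column. The corresponding monomial is, up to sign,
$$\prod_{(u,w)\in R'} r_w \prod_{(u,w)\in B'} b_w = \prod_v r_v^{\deg^-_{R'}(v)}\, b_v^{\deg^-_{B'}(v)}.$$
Moreover, because the columns are fixed in advance, the out-degree data of $\C'$ agrees with that of $\C$: $\deg^+_{R'}=\deg^+_R$ and $\deg^+_{B'}=\deg^+_B$.

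\textbf{The monomial of $\C$ survives.} Suppose two terms produce the monomial of $\C$. Then they have the same in-degrees in each colour, and also the same out-degrees in each colour. Hence their degree function equals $f(\C)$. Recoverability of $\C$ then forces that term to be $\C$ itself.

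So the monomial $\prod_v r_v^{\deg^-_R(v)}b_v^{\deg^-_B(v)}$ appears in $\det N$ exactly once, with coefficient $\pm1$. This coefficient stays nonzero modulo every prime $p$, so $\det N\neq 0$ over $\mathbb{F}_p$ and over $\mathbb{Q}$. Therefore $G$ is $\hp$-independent for every $p$, including $p=0$.

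\textbf{Main obstacle.} The delicate step is the identification of Leibniz terms with configurations. One must check that the sign pattern of $M_n$ only affects the signs of terms, never cancels entries. One must also check that fixing the column set really pins down the out-degrees of each colour, so that equality of monomials upgrades to full equality of degree functions. This is exactly where the bound $\max(\Delta^+(R),\Delta^+(B))\le 1$ is used: it ensures the column set records which vertices have an outgoing arc of each colour, that is, the functions $\deg^+_R$ and $\deg^+_B$.
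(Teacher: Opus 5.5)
Your proof is correct and follows the paper's argument: you identify Leibniz terms of a $|G|\times|G|$ minor of $M_n[G]$ with configurations, the monomial records the colour-wise in-degrees, and recoverability makes the monomial of $\C$ appear exactly once with coefficient $\pm1$, so the determinant is nonzero in every characteristic. The one thing you add is making explicit what the paper's sketch leaves implicit: the minor is chosen by the tails of $\C$, so the column set fixes $\deg^+_R$ and $\deg^+_B$, and equal monomials therefore force equal degree functions.
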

\noindent
To briefly summarize its proof -- it is claimed that the matrix $M_n[G]$, the restriction of $M_n$ to the rows of $G$ has rank $|G|$ in every characteristic. It follows from Definition~\ref{def:hyperconn} that permutation diagonals in $|G|\times |G|$-minors of $M_n[G]$ naturally correspond to configurations $(R,B)$ of $G$ with $\Delta^+(R),\Delta^+(B)\leq 1$, whereby the $r_i$ and $b_i$ variables encode the red and blue edges, respectively. So, if a configuration is recoverable, its monomial will appear in the determinant expansion exactly once, and therefore the determinant will be non-zero.  

Hence, in order to prove Theorem~\ref{thm:bernsteincharp} it suffices to show that every Bernstein-orientable graph is $UFP$. 
So, let $G$ be a Bernstein-orientable graph on $[n]$, and take its Bernstein orientation $D$. Using the standard correspondence between acyclic orientations and vertex orderings, we may assume without restriction that in $D$ every edge $\{i,j\}$ for $i<j$ is oriented from $i$ to $j$. 

Define the auxiliary undirected graph $F$ as follows. 
$$V(F)\colon= \{a_1^+, a_n^-\}\cup\bigcup_{i=2}^{n-1}\{a_i^+,a_i^-\},$$
$$E(F)\colon=\{\{a_i^+,a_j^-\}: (i,j)\in E(D)\}.$$ 
\begin{lemma}
$F$ is a forest. 
\end{lemma}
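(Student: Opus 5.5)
The plan is to prove that $F$ is a forest by showing that any cycle in $F$ yields an alternating closed trail in $D$. This contradicts the assumption that $D$ is Bernstein.

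First I record the structure of $F$. It is bipartite, with parts $\{a_i^+\}$ and $\{a_j^-\}$, because every edge joins some $a_i^+$ to some $a_j^-$. The vertex set is chosen correctly: vertex $1$ has no in-arcs and vertex $n$ has no out-arcs under the ordering convention, so $a_1^-$ and $a_n^+$ are never needed. The map sending an arc $(i,j)\in E(D)$ to the edge $\{a_i^+,a_j^-\}$ is a bijection from $E(D)$ onto $E(F)$. Indeed, the arc is recovered from the edge, and since $D$ is a simple oriented graph, $F$ has no parallel edges. Hence every cycle of $F$ has even length $2m$ with $m\geq 2$.

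Now suppose for a contradiction that $F$ contains a cycle
$$a_{i_1}^+,\,a_{j_1}^-,\,a_{i_2}^+,\,a_{j_2}^-,\dots,a_{i_m}^+,\,a_{j_m}^-,\,a_{i_1}^+.$$
Let $e_0,\dots,e_{2m-1}$ be the corresponding arcs of $D$, read cyclically. They are pairwise distinct because the corresponding $F$-edges are distinct. I then check the two conditions in the definition of an alternating closed trail.

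Two consecutive $F$-edges meet either at some $a_i^+$ or at some $a_j^-$. If they meet at $a_i^+$, both arcs leave $i$ in $D$. If they meet at $a_j^-$, both arcs enter $j$. In either case $e_k$ and $e_{k+1}$ share a vertex $v_k$ of $D$ and have the same direction with respect to it.

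Next, consecutive shared vertices $v_k$ and $v_{k+1}$ are the two endpoints of the single arc $e_{k+1}$. One is its tail $i$ and the other its head $j$, with $i<j$, so they are distinct. Note that non-consecutive $v_k$ may coincide, but the definition allows this; it is only the $F$-vertices that are distinct along the cycle.

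So $e_0,\dots,e_{2m-1}$ is an alternating closed trail in $D$, which is the desired contradiction. I expect no real obstacle here. The only step needing care is the bookkeeping: the alternation in the cycle between $+$ and $-$ vertices is exactly what forces the direction pattern $\rightarrow\leftarrow\rightarrow\cdots$. Notably, acyclicity of $D$ is not used in this lemma; it enters only through the vertex ordering convention.
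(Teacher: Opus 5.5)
Your proof is correct and follows the paper's argument: a cycle in the bipartite graph $F$ gives a cyclic sequence of distinct arcs $i_0\rightarrow i_1\leftarrow\cdots\leftarrow i_0$ in $D$, which is an alternating closed trail. You are more explicit than the paper in two places — you get distinctness of arcs from the arc-to-edge bijection rather than the even/odd index argument, and you check the condition $v_k\neq v_{k+1}$ directly — but the approach is the same.
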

\begin{proof}
Note that $F$ is bipartite under the vertex partition $\{a_1^+,\dots,a^+_{n-1}\}\cup\{a_2^-,\dots,a_n^-\}$. Suppose $F$ contains a cycle. It will be of the form $$a_{i_0}^+,a_{i_1}^-,\dots,a_{i_{2m-1}}^-,a_{i_{2m}}^+=a_{i_0}^+.$$ 
Then $D$ would contain the cyclic sequence of arcs
$$i_0\rightarrow i_1 \leftarrow \dots \rightarrow i_{2m-1} \leftarrow i_{2m}=i_0.
$$
in which the even indexed vertices are distinct, and the odd indexed ones are also distinct. Since each arc goes from an even to an odd indexed vertex, all the arcs must be distinct. Therefore this sequence forms an alternating closed trail in $D$, a contradiction. 
\end{proof}
\noindent
Hence, as a forest, $F$ admits an orientation of maximum out-degree at most $1$. Let the auxiliary oriented graph $\vec{F}$ be such an orientation of $F$.

Now, for every edge $(i,j)$ of $D$ (where $i<j$) we colour it \emph{blue} if $(a_i^+,a_j^-)\in E(\vec{F})$ and \emph{red} if $(a_j^-,a_i^+)\in E(\vec{F})$. 
This defines an edge-partition of $D$ into two oriented graphs $B$ and $R$, given by the blue and red edges, respectively. 

\begin{lemma}\label{lem:DeltaminusDeltaplus}
We have $\Delta^+(B)\leq 1$ and $\Delta^-(R)\leq 1$.
\end{lemma}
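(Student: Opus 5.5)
The statement is a direct translation of the out-degree bound in $\vec{F}$ back to $D$. The approach is to fix a vertex $i$ of $D$ and identify exactly which vertex of $F$ carries the arcs of each colour and direction at $i$. The key point is how the edges of $F$ are formed. An arc $(i,j)$ of $D$ with $i<j$ corresponds to the edge $\{a_i^+,a_j^-\}$ of $F$. Hence all arcs leaving $i$ in $D$ correspond to edges of $F$ at the single vertex $a_i^+$, and all arcs entering $i$ correspond to edges at $a_i^-$.

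For the blue bound, the plan is the following. An arc $(i,j)$ is blue exactly when it is oriented $a_i^+\to a_j^-$ in $\vec{F}$. So the blue arcs leaving $i$ in $D$ correspond injectively to arcs of $\vec{F}$ leaving $a_i^+$. The correspondence is injective because distinct arcs $(i,j)$, $(i,j')$ give distinct $F$-edges, as $j\neq j'$. Therefore
\[
\deg^+_B(i)\le \deg^+_{\vec F}(a_i^+)\le 1 .
\]
Vertices with no out-arcs in $D$, such as $n$, have no corresponding $a^+$ vertex, but they contribute nothing and need no separate treatment.

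For the red bound, the argument is symmetric. An arc $(i,j)$ is red exactly when it is oriented $a_j^-\to a_i^+$ in $\vec{F}$. So the red arcs entering $j$ in $D$ correspond injectively to arcs of $\vec{F}$ leaving $a_j^-$, which gives
\[
\deg^-_R(j)\le \deg^+_{\vec F}(a_j^-)\le 1 .
\]

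I do not expect a real obstacle here. The only thing to check carefully is that the map from arcs of $D$ to edges of $F$ is well defined and injective. This holds because $D$ is a simple oriented graph with every arc going from the smaller to the larger label, so each edge of $G$ yields exactly one edge of $F$. The separation of $a_i^+$ and $a_i^-$ into different vertices of $F$ is what makes the two bounds independent of each other.
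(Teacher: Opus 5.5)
Your proposal is correct and follows the paper's argument: blue arcs leaving $k$ correspond to arcs of $\vec{F}$ leaving $a_k^+$, and red arcs entering $k$ correspond to arcs of $\vec{F}$ leaving $a_k^-$, so the bound $\Delta^+(\vec{F})\leq 1$ transfers directly. The paper argues by contradiction (two such arcs at $k$ would give $a_k^{\pm}$ two out-neighbours in $\vec{F}$) rather than via an explicit injection, but the content is the same.
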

\begin{proof}
Suppose some $k\in [n]$ has two out-neighbours $i$ and $j$ in $B$ for some $i,j>k$. Then $(a_k^+, a_i^-)$ and $(a_k^+, a_j^-)$ are arcs of $\vec{F}$,
and so $a_k^+$ has two out-neighbours in $\vec{F}$, a contradiction. 	
	
Similarly, suppose $k$ has two in-neighbours $i$ and $j$ in $R$ for some $i,j<k$. Then $(a_k^-, a_i^+)$ and $(a_k^-, a_j^+)$ are arcs of $\vec{F}$, and so $a_k^-$ has two out-neighbours in $\vec{F}$, a contradiction. 
\end{proof}
\noindent
Note that by Lemma~\ref{lem:Bernrec} the configuration $\C=(R,B)$ of $G$ is recoverable since $R\cup B=D$ was Bernstein. For an oriented graph $J$ let $r(J)$ denote its reverse orientation. Note that $r$ is an involution: $r(r(J))=J$. Moreover,
$$g(r(J))=(\deg^+_{r(J)}, \deg^-_{r(J)})=(\deg^-_J, \deg^+_J). 
$$
In particular, by Lemma~\ref{lem:DeltaminusDeltaplus} we have $\Delta^+(r(R))\leq 1$. Therefore, $\bar\C=(r(R), B)$ is a configuration of $G$ with 
$$\max(\Delta^+(r(R)),\Delta^+(B))\leq 1.$$ 
\noindent
We now claim that $\bar\C$ is recoverable. Indeed, suppose for some configuration $\bar\C'=(R',B')$ of $G$ we have $f(\bar\C')=f(\bar\C)$. That is, 
$$(\deg^+_{R'},\deg^-_{R'},\deg^+_{B'},\deg^-_{B'})=(\deg^+_{r(R)},\deg^-_{r(R)},\deg^+_B,\deg^-_B)=(\deg^-_R,\deg^+_R,\deg^+_B,\deg^-_B).
$$
Then the configuration $\C'=(r(R'),B')$ of $G$ satisfies 
$$f(\C')=(\deg^-_{R'},\deg^+_{R'},\deg^+_{B'},\deg^-_{B'})=(\deg^+_R,\deg^-_R,\deg^+_B,\deg^-_B)=f(\C).$$
However, $\C$ is recoverable, so $\C'=\C$, implying $B'=B$ and $r(R')=R$. Hence, 
$$R'=r(r(R'))= r(R),$$ 
yielding $\bar\C'=\bar\C$. 
Therefore $G$ is $UFP$, and this completes the proof of Theorem~\ref{thm:bernsteincharp}.

The configuration $\bar{\C}$ from the proof might be of independent interest. Let us summarize its properties below.
\begin{cor}\label{cor:redblue}
A Bernstein-orientable (equivalently, $\h$-independent) graph $G$ can be red/blue-coloured such that
\begin{itemize}
	\item The red and blue graphs $R$ and $B$ are forests,
	\item There exist tree orders $<_{_R}$ and $<_{_B}$ on $V(G)$ such that $v<_{_R} w$ iff $w<_{_B} v$, and 
	\item There is no colour-alternating trail $$v_0>_{_R} v_1>_{_B} v_2>_{_R}\dots>_{_B} v_{2m}=v_0.$$ 
\end{itemize}
\end{cor}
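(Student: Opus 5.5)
The plan is to read all three properties directly off the configuration $\bar\C=(r(R),B)$ constructed in the proof of Theorem~\ref{thm:bernsteincharp}, with the roles of the colours named as in the corollary. So let the red graph be $R^*=r(R)$ and the blue graph be $B$. The Bernstein orientation is $D$, with arcs $i\to j$ for $i<j$.

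For the first item, we already know $\Delta^+(R^*)\le 1$ and $\Delta^+(B)\le 1$. Recoverability of $\bar\C$ rules out monochromatic directed cycles, since reversing such a cycle preserves the degree function. Hence each colour class is a forest in which every vertex has at most one out-arc. The out-arc points to its parent, so every component is an in-arborescence rooted at its unique vertex of out-degree $0$.

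For the tree orders, define $v<_{R}w$ to mean that $w$ is reachable from $v$ along red arcs, i.e. $w$ is a red ancestor of $v$; define $<_{B}$ similarly. Within each component these are tree orders. The relation claimed in the second item is, I think, best read edge by edge rather than as a global duality. A red arc of $\bar\C$ $j\to i$ (reversed from $R$) has $i<j$ in the vertex order, while a blue arc $i\to j$ has $i<j$. So along red arcs the index decreases and along blue arcs it increases. The compatibility statement is then that both tree orders are compatible with the single linear order $[n]$, one as a suborder of it and the other of its reverse. I would state precisely which version is meant and verify it, since the literal ``iff'' between two forests cannot hold globally. This interpretive step is the one I expect to be delicate.

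For the third item, take any colour-alternating closed trail $v_0>_R v_1>_B v_2>_R\cdots$ made of single arcs. Translated back through $r$, consecutive steps use arcs of $D$ that meet at a shared vertex with the same direction. A red step $v_{2k}\to v_{2k+1}$ in $\bar\C$ corresponds to the $D$-arc $v_{2k+1}\to v_{2k}$. The blue step then leaves $v_{2k+1}$ in $D$ as well, and so on. This gives an alternating closed trail in $D$, contradicting that $D$ is Bernstein.

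The main obstacle is matching the intended meaning of the orders: whether comparabilities are single arcs or whole ancestor paths. If they are ancestor paths, I would expand each comparability into its arc path. The resulting sequence is then a concatenation of monochromatic directed paths. I would use recoverability of $\bar\C$, via the exchange argument of Lemma~\ref{lem:sameorientdiffcols}, to show that it still yields two distinct configurations with equal degree functions, which is a contradiction.
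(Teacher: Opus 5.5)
Your main line is the paper's: the corollary simply records the properties of $\bar\C=(r(R),B)$. Your treatment of the first bullet (out-degree at most one and no monochromatic directed cycle) is correct. So is your treatment of the third bullet for single arcs: a colour-alternating directed closed trail in $\bar\C$ becomes an alternating closed trail in $D$ once the red arcs are reversed. Your $>_R$ convention is flipped between your second and third items, but this is harmless, since reversing a closed trail swaps the convention. For the second bullet you do not need to weaken anything. Read the tree orders as linear orders in which each vertex has at most one neighbour of the given colour above it. Take $<_B$ to be $1<2<\dots<n$ and $<_R$ its reverse. Then the unique out-neighbour of a vertex in $B$, respectively in $r(R)$, lies above it and all its other neighbours of that colour lie below, so the `iff' holds literally. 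This is exactly the content you identified.

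The fallback in your last paragraph would fail, because under the ancestor-path reading the third bullet is false for $\bar\C$. Take the triangle with $D=\{1\to2,1\to3,2\to3\}$. Then $F$ is the path $a_2^-a_1^+a_3^-a_2^+$. Orienting it towards $a_2^-$ gives $B=\{1\to2,2\to3\}$ and $R=\{1\to3\}$, so $\bar\C=(\{3\to1\},\{1\to2,2\to3\})$. This configuration is recoverable, by the paper's argument or by direct inspection. Yet $3\to1\to2\to3$ is a directed cycle consisting of a red arc followed by a blue path. In the ancestor sense this gives $3>_R1>_B3$, or $1>_R3>_B1$ with the opposite convention. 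So no exchange argument can produce a second configuration with the same degree function. The swap behind Lemma~\ref{lem:sameorientdiffcols} needs the two arcs at every junction to have different colours and the same direction in $D$. An internal vertex of a monochromatic path, by contrast, carries one in-arc and one out-arc of the same colour. The third bullet must therefore be read, as the word `trail' indicates, with $>_R$ and $>_B$ denoting single arcs, and then your main argument is all that is needed.
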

\noindent
In fact, it is not hard to see that the converse also holds: a graph admitting a colouring as above is Bernstein-orientable. We omit the details.  

\section{Plane rigidity}\label{sec:R}
\noindent
In this section we collect some basic facts about $2$-rigidity matroids and families. 

It follows directly from Definition~\ref{def:R} that $\R_n$-circuits are graphs $G$ with $|G|=2n-2$ and such that every subset of $2\leq m<n$ vertices induces at most $2m-3$ edges. This in turn implies that $\R_n$ is maximal 
in the following sense.
\begin{prop}\label{prop:lamanmax}
Let $\M_n$ be a $2$-rigidity matroid. Then every $\M_n$-independent graph is $\R_n$-independent. Consequently, for any $2$-rigidity family $\M$, every $\M$-independent graph is $\R$-independent.
\end{prop}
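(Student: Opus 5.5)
We argue by contrapositive: it suffices to show that every $\R_n$-dependent graph is $\M_n$-dependent. Since dependence is inherited by supersets, it is enough to treat $\R_n$-circuits. By Definition~\ref{def:R}, a minimal $\R_n$-dependent graph $G$ has a vertex set $U$ with $|U|=m$ and exactly $2m-2$ induced edges. Moreover, every proper subset of $m'$ vertices of $U$ spans at most $2m'-3$ edges; otherwise we could pass to a smaller violating set. So the circuit $G$ lives on $m$ vertices and has $2m-2$ edges.

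The key step is to compare with the rank of the restriction of $\M_n$ to $\binom{U}{2}$. We need that the rank of $\M_n$ restricted to the complete graph on $U$ is at most $2m-3$. Once this holds, the $2m-2$ edges of $G\subseteq\binom{U}{2}$ cannot be independent, which is what we want. To bound this rank, we build a spanning set of $\binom{U}{2}$ of size $2m-3$ in the closure sense. Order $U$ as $u_1,\dots,u_m$ and take the graph $T$ consisting of the edge $u_1u_2$ together with the edges $u_ku_1$ and $u_ku_2$ for $k\ge 3$. This graph has $1+2(m-2)=2m-3$ edges.

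We claim every edge $u_ku_l$ with $k,l\ge3$ lies in the $\M_n$-closure of $T$. The four vertices $u_1,u_2,u_k,u_l$ span a $K_4$, which is an $\M_n$-circuit by condition~(2) of the definition of a $d$-rigidity matroid (here $d+2=4$). The edge $u_ku_l$ lies in this circuit, and the other five edges belong to $T$, so $u_ku_l$ is in the closure of $T$. Hence $\binom{U}{2}\subseteq \mathrm{cl}(T)$, and the rank of $\binom{U}{2}$ is at most $|T|=2m-3$. For $m\le 3$ the bound $2m-3\ge\binom{m}{2}$ is trivial; for example, $m=2$ gives rank at most $1$. In those cases no set violates the count, so nothing is needed.

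The second sentence of the proposition follows immediately. If $G$ is $\M$-independent, then it is $\M_n$-independent for $n\ge v(G)$, hence $\R_n$-independent by the first part, hence $\R$-independent because $\R$ is a matroidal family. The only point requiring care is the reduction from violating vertex subsets to a dependent edge set inside $\binom{U}{2}$. This is immediate because dependence is monotone, so in fact we do not even need minimality: any $U$ with more than $2|U|-3$ induced edges already gives an $\M_n$-dependent subgraph. The rank condition~(1) is not used at all, only the $K_4$-circuit condition.
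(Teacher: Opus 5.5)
Your proof is correct and follows the same line as the paper. The paper only notes that an $\R_n$-circuit on $m$ vertices has $2m-2$ edges, so it must be dependent in any $2$-rigidity matroid; your closure argument, which spans $\binom{U}{2}$ by the $2m-3$ edges of $T$ using the $K_4$-circuits, supplies the fact the paper leaves implicit, namely that $\binom{U}{2}$ has rank at most $2m-3$ in $\M_n$.
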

\noindent
A \emph{Henneberg $0$-extension} of a graph $G$ is an addition of a new vertex of degree $2$. It is well-known (see e.g.~\cite{Ngu10}) that every $2$-rigidity matroid $\M_n$ has the \emph{$0$-extension property}. That is, if $G^+\subseteq \ntwo$ is a $0$-extension of an $\M_n$-independent graph $G$ then $G^+$ is also $\M_n$-independent. Therefore, the same holds for $2$-rigidity families $\M$: if $G$ is $\M$-independent, then so is $G^+$. We will also use it in the contrapositive: if $G^+$ is $\M$-dependent, then so is $G$.

By monotonicity, adding a new vertex of degree $1$ also preserves $\M$-independence. A graph is called \emph{$2$-degenerate} if it can be constructed from an edgeless graph by sequentially adding new vertices of degree $1$ or $2$. By the above, $2$-degenerate graphs are $\M$-independent. Call a graph \emph{properly subcubic} if its degrees are at most $3$ and it has a vertex of degree at most $2$. 
\begin{lem}\label{lem:subcubic}
	Let $\M$ be a $2$-rigidity family and $G$ a graph.
	\begin{enumerate}
		\item If $G$ is an $\M$-circuit then every vertex of $G$ has degree at least $3$.
		\item If $G$ is connected and properly subcubic, then $G$ is $\M$-independent.
		\item If $G$ is connected, cubic and $\M$-dependent, then $G$ is an $\M$-circuit.
	\end{enumerate}
\end{lem}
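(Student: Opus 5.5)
The three parts will be proved in order, each from the previous one, using only the $0$-extension property and monotonicity (i.e.\ $2$-degenerate additions preserve $\M$-independence).

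For (1), let $G$ be an $\M$-circuit and suppose some vertex $v$ has $\deg_G(v)\le 2$. Since a circuit has no isolated vertices, $\deg_G(v)\in\{1,2\}$. Consider $G-v$. It is a proper subgraph of the circuit $G$, since it misses the edges at $v$. Hence $G-v$ is $\M$-independent; if $G-v$ is empty this is trivial. Now $G$ is obtained from $G-v$ by adding the vertex $v$ of degree $1$ or $2$. By monotonicity, respectively by the $0$-extension property (applied in $\M_n$ for $n\ge v(G)$ and transferred via the family property), $G$ is then $\M$-independent. This is a contradiction.

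For (2), let $G$ be connected and properly subcubic, and suppose $G$ is $\M$-dependent. Then $G$ contains an $\M$-circuit $C$, and by (1) every vertex of $C$ has degree at least $3$ in $C$. Since degrees in $G$ are at most $3$, each vertex of $C$ has all three of its $G$-edges in $C$. Thus $C$ is a union of connected components of $G$. As $C$ is nonempty and $G$ is connected, $C=G$. But $G$ has a vertex of degree at most $2$, whereas every vertex of $C$ has degree $3$, a contradiction. Alternatively one can argue directly: repeatedly deleting vertices of degree $\le 2$ starting from a low-degree vertex exhausts a connected properly subcubic graph. This shows such a graph is $2$-degenerate, but the circuit argument is cleaner.

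For (3), let $G$ be connected, cubic and $\M$-dependent, and let $C\subseteq G$ be a circuit. Exactly the same argument as in (2) applies: every vertex of $C$ has $C$-degree $3$, which equals its $G$-degree. So $C$ is closed under taking incident $G$-edges, and connectivity gives $C=G$. Hence $G$ itself is a circuit.

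The only delicate point is the justification in (1) that the $0$-extension property, stated for matroids $\M_n$, applies to the family. This is immediate from the definition of a matroidal family, since independence is the same in every $\M_n$ with $n\ge v(G)$. Everything else is routine.
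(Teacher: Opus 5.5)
Your proof is correct. Part (1) matches the paper's argument, but for (2) and (3) you take a different route.

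\textbf{Your route.} You derive (2) and (3) from (1). An $\M$-circuit $C$ inside a graph of maximum degree at most $3$ has minimum degree $3$ by (1). So $C$ contains every edge at each of its vertices, which makes it a union of cubic connected components, and connectivity of $G$ then forces $C=G$.

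\textbf{The paper's route.} The paper proves (2) directly, without using (1). Deleting a vertex of degree at most $2$ from a connected properly subcubic graph leaves components that are again properly subcubic. By induction the graph is $2$-degenerate, hence independent via $0$-extensions and pendant additions. Part (3) then follows by applying (2) to $G-e$ for every edge $e$.

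\textbf{Comparison.} The two arguments are two sides of the standard fact that a graph is $2$-degenerate if and only if it has no nonempty subgraph of minimum degree at least $3$. The paper exhibits an explicit construction sequence certifying independence; you show that no circuit can fit. Your version is shorter and treats (2) and (3) uniformly. It also avoids a small point in (3): the graph $G-e$ may be disconnected when $e$ is a bridge. The paper therefore really needs (2) in the form ``every component is properly subcubic,'' which its induction does supply. Finally, your closure argument is exactly the reasoning the paper uses later when it invokes the lemma, e.g.\ to conclude that a circuit $C\subseteq G'$ must be cubic in the proof of Theorem~\ref{thm:cubic}.
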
	
\begin{proof}
	Claim (1) is a direct consequence of the $0$-extension property. 
	
	Concerning (2), observe that deleting a vertex of degree at most $2$ results in a graph with properly subcubic components. Hence, by induction, any such graph is $2$-degenerate.
	
	As for (3), note that for every edge $e\in G$ the graph $G-e$ has properly subcubic components, and therefore is $\M$-independent by (2). Hence, $G$ is an $\M$-circuit.
\end{proof}
\noindent
Two vertices in a graph are called \emph{twins} if they have the same neighbours (in particular, twins are not adjacent). Note that being twins is an equivalence relation giving rise to \emph{twin classes} of vertices. 
For a graph $G$ and a vertex  $v\in V(G)$ we denote by $G\oplus v$ the graph obtained by `cloning' $v$, i.e. adding a new twin of $v$.

The next lemma is a crucial ingredient in the proofs of Theorems~\ref{thm:Laman1} and~\ref{thm:cubic}.
\begin{lemma}[Tripling trick]
	\label{lem:tripling}
	Let $\M=\Mn$ be a $2$-rigidity family. Let $D$ be an $\M$-circuit and $v\in V(D)$ a degree $3$ vertex. Then
	\begin{enumerate}
		\item For any edge $e$ not incident with $v$ the graph $(D\oplus v)-e$ is $\M$-dependent. 
		\item For any degree $3$ vertex $w$ not adjacent to $v$ the graph $(D\oplus v)-w$ is $\M$-dependent.   
	\end{enumerate}
\end{lemma}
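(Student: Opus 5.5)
Both parts should follow from a single rank estimate: in a $2$-rigidity matroid, adding one new vertex to a graph raises the rank by at most $2$. So the plan is to establish that estimate and then count edges.

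\emph{Step 0: vertex additions.} Work in $\M_n$ with $n\geq v(D)+1$, which is allowed because $\M$ is a matroidal family; the clone $v'$ then gets a label in $[n]$. Let $G$ be a graph on a vertex set $U$ with $|U|=m$, and let $S$ be any set of edges joining a new vertex $x\notin U$ to vertices of $U$. Let $K_U$ be the complete graph on $U$, so that $K_U\cup S'$ is the complete graph on $U\cup\{x\}$, where $S'$ is the full star from $x$ to $U$. I will use submodularity in the form: for $A\subseteq B$ and any $S$, $r(A\cup S)-r(A)\geq r(B\cup S)-r(B)$ fails in general, but $r(A\cup S)-r(A)\le r(S)\le|S|$ always holds. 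The sharper bound I need is obtained as follows. By submodularity applied to $G\cup S$ and $K_U$,
\[
r(G\cup S)+r(K_U)\ \geq\ r(K_U\cup S)+r\big((G\cup S)\cap K_U\big)\quad\text{is the wrong direction,}
\]
so I apply it instead to $X=G\cup S$ and $Y=K_U$, whose union is $K_U\cup S$ and whose intersection is $G$:
\[
r(G\cup S)+r(G)\ \le\ \dots
\]
does not come out either. The clean formulation is this: $K_U$ is closed under the matroid closure relative to its own vertex set and has rank $2m-3$ when $m\geq 2$, while $K_U\cup S'$ has rank $2(m+1)-3=2m-1$. Hence $r(K_U\cup S')-r(K_U)=2$. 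By submodularity (diminishing returns), the marginal rank of $S$ over $G\subseteq K_U$ is at least its marginal rank over $K_U$, which is the opposite of what I want. Therefore I will derive the bound directly: since $D$ is a circuit, every vertex of $D$ has degree at least $3$ by Lemma~\ref{lem:subcubic}(1), so the required bound only needs to be proved in the situations below, where $S$ consists of three edges. The needed claim is that the three edges $xa,xb,xc$ contribute at most $2$ to the rank. I would prove it via the $0$-extension property together with the circuit $K_4$ on $\{x,a,b,c\}$, where the edges $ab,bc,ca$ may be absent from $G$. If $r(G\cup S)=r(G)+3$, then $G\cup S$ restricted to a basis $I$ of $G$ gives an independent set $I\cup S$. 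I expect this step to be the main obstacle, and the argument I would try first is the standard one from Nguyen~\cite{Ngu10}: a degree-$3$ vertex added to an independent graph cannot be "doubly" free.

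\emph{A simpler route avoiding Step 0.} Instead I would use circuit elimination together with symmetry, which is cleaner. Let $N(v)=\{a,b,c\}$, let $v'$ be the clone of $v$, and set $D'=(D-v)+\{v'a,v'b,v'c\}$. Then $D'$ is an isomorphic copy of $D$, so $D'$ is an $\M$-circuit by symmetry, and $D\cup D'=D\oplus v$ with $D\neq D'$.

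For (1): if $e\notin D$, then $(D\oplus v)-e\supseteq D$ and is dependent. If $e\in D$ and $e$ is not incident with $v$, then $e\in D\cap D'$. The circuit elimination axiom then yields a circuit contained in $(D\cup D')-e=(D\oplus v)-e$.

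For (2): let $w$ have neighbours $x,y,z$. Since $w$ is not adjacent to $v$, the three edges $wx,wy,wz$ lie in $D\cap D'$, and $(D\oplus v)-w=(D\cup D')-\{wx,wy,wz\}$. Here I would use the rank form of elimination, which holds in any matroid. Since $D$ and $D'$ are distinct circuits,
\[
r(D\cup D')\ \le\ r(D)+r(D')-r(D\cap D')=(|D|-1)+(|D|-1)-|D\cap D'|,
\]
where $r(D\cap D')=|D\cap D'|$ because $D\cap D'$ is a proper subset of a circuit and hence independent. Now $|D\cup D'|=2|D|-|D\cap D'|$, so the nullity of $D\cup D'$ is at least $2$. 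To get nullity $3$ I refine the estimate using $w$. In the same rank estimate I replace $D'$ by $D'$ and $D$ by the circuit obtained in (1) for each of the three edges at $w$. Concretely: $D-w$ is independent with $|D|-3$ edges, and $D\oplus v-w=(D-w)\cup\{v'a,v'b,v'c\}$. So the goal reduces to showing that the three edges at $v'$ raise the rank of $D-w$ by at most $2$, which is exactly the bound from Step 0.

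To secure that bound I would note that $D-v$ spans $a,b,c$ in the sense that adding $v$ with its three edges to $D-v$ raises the rank by exactly $2$, because $D$ is a circuit. Then $(D\oplus v)-w\supseteq D-w$, and the rank count
\[
r\big((D\oplus v)-w\big)\le r(D-w)+2=|D|-1<|D|=|(D\oplus v)-w|
\]
gives dependence.
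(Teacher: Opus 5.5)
Your part (1) is correct and is exactly the paper's argument: circuit elimination applied to $D$ and its isomorphic copy $D'=(D\oplus v)-v$ on a common edge $e$. Your extra case $e\notin D$ is trivial and harmless.

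Part (2), however, has a genuine gap. Since $D-w$ is independent with $|D|-3$ edges and $(D\oplus v)-w$ has $|D|$ edges, your final inequality $r((D\oplus v)-w)\le r(D-w)+2$ is just a restatement of what you want to prove. Neither justification you offer establishes it:
\begin{itemize}
\item The general bound of Step~0 is false. Take $G$ to be the path $abc$ and join a new vertex $x$ to $a,b,c$. The result is $K_4$ minus the edge $ac$, which is independent in every $2$-rigidity matroid, so the rank jumps from $2$ to $5$.
\item Knowing that the three edges at $v'$ raise the rank of $D-v$ by exactly $2$ says nothing about $D-w$. Submodularity only transfers an upper bound on a marginal gain from a set to its supersets, and $D-w\not\supseteq D-v$, because $D-w$ lacks the edges at $w$.
\end{itemize}
Pure counting from your (correct) bound $r(D\oplus v)\le |D|+1$ cannot finish either: deleting the three edges at $w$ can lower the nullity by three. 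What is missing is a rigidity-specific ingredient.

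That ingredient is the $0$-extension property applied at $w$, which is what the paper uses. Let $e$ be any edge at $w$. Since $w$ is not adjacent to $v$, the edge $e$ is not incident with $v$, so $(D\oplus v)-e$ is dependent by (1). In rank form, your nullity computation gives the same thing: $r((D\oplus v)-e)\le |D|+1$. In $(D\oplus v)-e$ the vertex $w$ has degree $2$. By the contrapositive of the $0$-extension property, deleting $w$ preserves dependence; equivalently, deleting a degree-$2$ vertex lowers the rank by exactly $2$. Since $((D\oplus v)-e)-w=(D\oplus v)-w$, this gives $r((D\oplus v)-w)\le |D|-1<|D|$, proving (2).
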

\begin{proof}
	We may assume that $V(D)=[n]$. In particular $D$ is an $\M_n$-circuit, and therefore also an $\M_{n+1}$-circuit.
	Let $D^+=D\oplus v$, where we can assume that the new twin of $v$ is the vertex $n+1$. Let $D'=D^+-v$. Since $D$ and $D'$ are isomorphic (we replaced $v$ with its twin $n+1$), by symmetry $D'$ is an $\M_{n+1}$-circuit. Let $e$ be an edge of $D$ not incident with $v$. Note that both $D$ and $D'$ contain $e$.
Hence, by the circuit elimination axiom in $\M_{n+1}$ 
we obtain that $D\cup D'-e=(D\oplus v)-e$ is $\M$-dependent.

	Let now $w$ be a degree $3$ vertex of $D$ not adjacent to $v$.
	If $v$ and $w$ are twins then $(D\oplus v)-w$ is isomorphic to $D$, and therefore $\M$-dependent. Assuming otherwise, since $v$ and $w$ have the same degree, there will be an edge $e$ incident with $w$ but not with $v$. By part (1), $(D\oplus v)-e$ is an $\M$-dependent graph in which $w$ has degree $2$. By the $0$-extension property, $((D\oplus v)-e)-w=(D-w)\oplus v$ must also be $\M$-dependent.  	
\end{proof}
\begin{remark}
	The operation $(G-w)\oplus v$ is known in extremal graph theory as the ``Zykov symmetrization''. It was  introduced in~\cite{Zyk49} in order to prove what now is known as Tur\'an's theorem, which Zykov discovered independently.
\end{remark}
\noindent
A \emph{Henneberg $1$-extension} is an addition of a new vertex of degree $3$ connected to three vertices $u,v,w$ of $G$ such that $uv$ is an edge, and deletion of the edge $uv$. The $1$-extensions are known to preserve independence in $\R$.
A \emph{suppression} is the inverse operation. That is, a vertex of degree $3$ whose neighbours do not form a triangle is deleted and a missing edge between two of its neighbours is added. While it is not true in general that suppressions preserve $\R$-independence, it still holds partially in the following sense. We say that a graph $G$ is an \emph{$\R$-base} if it is isomorphic to an $\R_n$-base for $n=v(G)$. In other words, $|G|=2v(G)-3$ and every subset of $2\leq m<v(G)$ vertices induces at most $2m-3$ edges (in the literature these graphs are also referred to as ``minimally generically rigid'' and ``isostatic'').
\begin{prop}[\cite{Tay85}, see also~\cite{Ber03}]\label{prop:suppr}
	Let $B$ be an $\R$-base and $v$ a degree $3$ vertex in $B$. Then there exists a suppression at $v$ resulting in an $\R$-base. 
\end{prop}
\noindent
We shall need the following consequence of the above. 
\begin{lemma}\label{lem:precircuit}
	Let $B$ be an $\R$-base and $v\in V(B)$ a vertex of degree $3$. Then there exists an edge $e\in \binom{V(B)}{2}\setminus B$ with $v\notin e$ such that the fundamental $\R$-circuit of $B+e$ contains $v$.   
\end{lemma}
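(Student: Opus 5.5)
We derive the lemma from Proposition~\ref{prop:suppr}. Let $n=v(B)$ and let $x,y,z$ be the three neighbours of $v$ in $B$. By Proposition~\ref{prop:suppr} there is a suppression at $v$ producing an $\R$-base. Relabelling the neighbours if necessary, the neighbours do not form a triangle, $yz\notin B$, and
$$B' := (B - v) + yz$$
is an $\R$-base on the $n-1$ vertices $V(B)\setminus\{v\}$. We take $e:=yz$. Then $e\in\binom{V(B)}{2}\setminus B$ and $v\notin e$, as required.

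Since $B$ is an $\R$-base on $V(B)$, $B+e$ contains a unique $\R$-circuit $C$, and $e\in C$. It remains to show $v\in V(C)$. Suppose not. Then $C\subseteq (B+e)-v$. Every edge of $(B+e)-v$ avoids $v$, so
$$(B+e)-v = (B-v)+yz = B'.$$
Hence $C\subseteq B'$. But $B'$ is an $\R$-base, hence $\R$-independent, and so contains no circuit. This contradiction gives $v\in V(C)$. (Since $C$ has minimum degree at least $3$ by Lemma~\ref{lem:subcubic}(1), in fact all three edges at $v$ lie in $C$, though this is not needed.)

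The only subtle point is that the suppression guaranteed by Proposition~\ref{prop:suppr} adds an edge between two neighbours of $v$ that is \emph{missing} from $B$. This holds by the definition of suppression, which adds a missing edge. It is exactly what makes $e$ a legitimate non-edge of $B$ not incident with $v$, and what identifies $(B+e)-v$ with the independent graph $B'$. The rest is immediate from uniqueness of fundamental circuits.
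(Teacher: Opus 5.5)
Your proof is correct and follows the paper's argument exactly: take the edge $e$ added by the suppression from Proposition~\ref{prop:suppr}, observe that $(B+e)-v$ is the resulting $\R$-base and hence independent, and conclude that the fundamental circuit of $B+e$ must contain $v$.
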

\begin{proof}
	By Proposition~\ref{prop:suppr} there exists an edge $e\in \binom{V(B)}{2}\setminus B$ not incident with $v$ such that $(B+e)-v$ is an $\R$-base. Let $C$ be the fundamental circuit of $B+e$. Since $(B+e)-v$ is $\R$-independent, we must have $v\in C$.
\end{proof}
\section{Uniqueness of $\R$}\label{sec:U}
\noindent
In this section we prove Theorem~\ref{thm:Laman1}.
 
For a $2$-rigidity family $\M$, let us call any $\M$-circuit which is not an $\R$-circuit a \emph{short $\M$-circuit}.
\begin{lemma}\label{lem:shortcircuitlamanbase}
	Every short $\M$-circuit $C$ is $\R$-independent. In particular, $|C|\leq 2v(C)-3$ and $C$ contains at least six degree $3$ vertices. 
\end{lemma}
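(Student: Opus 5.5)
The plan is to first show that $C$ is $\R$-independent, and then read off the two numerical consequences from Definition~\ref{def:R} together with Lemma~\ref{lem:subcubic}(1).

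\textbf{Independence.} Let $C$ be a short $\M$-circuit and let $e\in C$ be arbitrary. Since $C$ is an $\M$-circuit, $C-e$ is $\M$-independent. By Proposition~\ref{prop:lamanmax}, $C-e$ is then $\R$-independent. Hence $C$ is either $\R$-independent or $\R$-dependent.

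Suppose $C$ were $\R$-dependent. Then it contains an $\R$-circuit $C'\subseteq C$. For every $e\in C$, the graph $C-e$ is $\R$-independent, so $e\in C'$. Thus $C'=C$, so $C$ is an $\R$-circuit, contradicting shortness. Hence $C$ is $\R$-independent.

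\textbf{Edge bound.} Applying the count in Definition~\ref{def:R} to the full vertex set of $C$ gives $|C|\le 2v(C)-3$.

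\textbf{Degree-3 vertices.} By Lemma~\ref{lem:subcubic}(1), every vertex of $C$ has degree at least $3$. Let $n_3$ be the number of vertices of degree exactly $3$. Then
\[
2|C|=\sum_v \deg(v)\ \ge\ 3n_3+4\bigl(v(C)-n_3\bigr)=4v(C)-n_3 .
\]
Combining with $2|C|\le 4v(C)-6$ yields $n_3\ge 6$.

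Everything here is routine; the only point needing care is the minimality argument showing that an $\R$-dependent $C$ would be exactly an $\R$-circuit.
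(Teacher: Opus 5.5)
Your proof is correct and is essentially the paper's argument. The paper likewise notes that an $\R$-dependent $C$ which is not an $\R$-circuit would contain a proper $\R$-circuit, which is $\M$-independent and so contradicts Proposition~\ref{prop:lamanmax}; it then obtains $|C|\le 2v(C)-3$ and the six degree-$3$ vertices by the same degree count using Lemma~\ref{lem:subcubic}(1). (Your step ``every $e\in C$ lies in $C'$'' is just the contrapositive of that, and the sentence ``$C$ is either $\R$-independent or $\R$-dependent'' is vacuous and can be dropped.)
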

\begin{proof}
	Suppose that $C$ is an $\R$-dependent short $\M$-circuit. Since $C$ is not an $\R$-circuit, there is a proper subgraph $D\subsetneq C$ which is $\M$-independent and an $\R$-circuit, contradicting Proposition~\ref{prop:lamanmax}. So, $C$ is $\R$-independent, and by Definition~\ref{def:R}, we have $|C|\leq 2v(C)-3$. Consequently, the sum of all degrees in $C$ is at most $4v(C)-6$. Since, by Lemma~\ref{lem:subcubic}, each degree is at least $3$, by pigeonhole at least six vertices must be of degree exactly $3$. 
\end{proof}
\begin{theorem}\label{thm:short}
Suppose that $\M$ is a $2$-rigidity family in which $K_{3,3}$ is not a circuit. Then $\M$ has no short circuits.
\end{theorem}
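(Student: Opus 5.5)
The plan is a minimal-counterexample argument built on the tripling trick (Lemma~\ref{lem:tripling}) and Zykov symmetrization. Suppose $\M$ has a short circuit. Among all short $\M$-circuits, choose $C$ with the fewest vertices. Among those, choose one maximizing a potential that measures how far symmetrization has progressed, say $\Phi(C)=\sum_T |T|^2$ over twin classes $T$ of $C$. By Lemma~\ref{lem:shortcircuitlamanbase}, $C$ is $\R$-independent, $|C|\le 2v(C)-3$, all degrees are at least $3$, and $C$ has at least six vertices of degree $3$.

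\textbf{Step 1: nonadjacent degree-$3$ vertices are twins.} Let $v,w$ be nonadjacent degree-$3$ vertices that are not twins. By Lemma~\ref{lem:tripling}(2), $C_1=(C-w)\oplus v$ is $\M$-dependent. It has the same numbers of vertices and edges as $C$, and strictly larger $\Phi$, since $w$ is replaced by a new twin of $v$. Let $C_1'\subseteq C_1$ be an $\M$-circuit.
\begin{itemize}
\item If $C_1'$ is short, it contradicts the choice of $C$: either it has fewer vertices, or it equals $C_1$, in which case $\Phi$ increased.
\item So the real content is ruling out that $C_1'$ is an $\R$-circuit.
\end{itemize}

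\textbf{Step 2: ruling out an $\R$-circuit in $C_1$ (expected main obstacle).} My first attempt is to reorganize the proof of Lemma~\ref{lem:tripling} with Lemma~\ref{lem:precircuit}. Extend $C$ to an $\R$-base $B$ on $V(C)$. By Lemma~\ref{lem:precircuit}, pick an edge $e$ so that $v$ lies in the fundamental $\R$-circuit of $B+e$. Then use circuit elimination between $C$ and its copy with $v$ replaced by its clone, choosing the eliminated edge (Lemma~\ref{lem:tripling}(1)) at $w$. This controls the $\R$-structure of the resulting dependent graph.

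The key counting fact is this: any $\R$-circuit inside $C\oplus v$ minus an edge must use both $v$ and its clone, since $C$ itself is $\R$-independent. Having $v$ and its clone together forces a degree-$2$-type contradiction via the $0$-extension property, or an $\R$-count violation. If this fails in general, I would weaken the goal. It suffices that a short circuit survives on some vertex set of size at most $v(C)$, and Proposition~\ref{prop:lamanmax} excludes $\M$-independent $\R$-circuits, so dependent sets do not lose shortness cheaply.

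\textbf{Step 3: classifying the extremal structure.} Once nonadjacent degree-$3$ vertices are twins, consider the at least six degree-$3$ vertices. The "non-adjacent implies twin" relation, together with each such vertex having exactly three neighbours, gives a rigid structure. Twin classes of degree-$3$ vertices are independent sets sharing a common $3$-set of neighbours. Two distinct classes must then be completely joined, and degree $3$ caps the number of classes and their sizes.

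A short case analysis should then show one of the following:
\begin{itemize}
\item $C$ contains a $K_{3,t}$ with $t\ge 4$, which is $\R$-dependent, contradicting Lemma~\ref{lem:shortcircuitlamanbase};
\item $C$ contains a $K_4$, contradicting minimality;
\item $C\cong K_{3,3}$, contradicting the hypothesis that $K_{3,3}$ is not a circuit (recall that by Lemma~\ref{lem:subcubic}(3), $K_{3,3}$ is either independent or a circuit).
\end{itemize}
Every case contradicts the existence of $C$, so $\M$ has no short circuits.
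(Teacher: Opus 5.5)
Your overall architecture is the same as the paper's: a minimal counterexample, Zykov symmetrization via Lemma~\ref{lem:tripling}(2), and a potential that must increase. Your Step~3 case analysis, together with connectivity of circuits from Proposition~\ref{prop:circuits2conn}, is a valid substitute for the paper's route through Lemma~\ref{lem:notwotwins} and the six degree-$3$ vertices.

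\textbf{The gap is Step~2.} You correctly flag it as the heart of the matter, but you do not prove it, and the heuristic you offer is not correct. An $\R$-circuit $D'\subseteq C_1$ containing both $v$ and its clone $v'$, with all their edges, violates no degree condition and no count. For example, the $\R$-circuit $K_{3,3}+e$ contains degree-$3$ twins. So nothing local rules $D'$ out; the contradiction must come from the minimality of $C$.

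The missing idea is the paper's Lemma~\ref{lem:nolamaninside}: a critical short circuit $C$ contains no $\R$-base $B\subseteq C$ with $V(B)\neq V(C)$ and a vertex $v$ satisfying $\deg_B(v)=\deg_C(v)=3$. Its proof runs as follows.
\begin{enumerate}
\item Apply Lemma~\ref{lem:precircuit} to $B$ itself to get an $\R$-circuit $D\subseteq B+e$ containing $v$ and $e$.
\item Vertex-minimality of $C$ makes $D$ an $\M$-circuit.
\item Hence $C+e-f$, for an edge $f$ at $v$, is $\M$-dependent, and by the $0$-extension property so is $C+e-v$.
\item The $\M$-circuit $S\ni e$ inside $C+e-v$ must be an $\R$-circuit, again by minimality.
\item Eliminating $e$ from $D$ and $S$ simultaneously in $\M$ and in $\R$ forces $C=(D\cup S)-e$ to be $\R$-dependent, contradicting Lemma~\ref{lem:shortcircuitlamanbase}.
\end{enumerate}
With this lemma your Step~2 is immediate. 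If $D'$ were an $\R$-circuit, then $B=D'-v'$ is $\M$-independent, hence $\R$-independent, and $|B|=|D'|-3=2v(B)-3$. So $B$ is an $\R$-base inside $C_1-v'=C-w$ in which $v$ has full degree $3$, which the lemma forbids.

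Your suggestion of extending $C$ to an $\R$-base on $V(C)$ and applying Lemma~\ref{lem:precircuit} there does not work. That base is not contained in $C$, so it need not be $\M$-independent. The vertex $v$ need not have degree $3$ in it. And it does not live on a proper vertex subset, so minimality cannot be invoked.

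\textbf{Two smaller fixes in Step~1.}
\begin{itemize}
\item Your ordering (fewest vertices, then largest $\Phi$) does not handle a short circuit $C_1'\subsetneq C_1$ with $V(C_1')=V(C_1)$. You must minimize $|C|$ before maximizing the potential, as the paper does.
\item $\Phi$ need not increase. Passing from $C$ to $(C-w)\oplus v$ changes $\Phi$ by $2(|T_v|-|T_w|)+2$ plus nonnegative merging terms, which can be negative when $w$'s twin class is much larger than $v$'s. You must choose which of the two vertices to clone so that $|T_v|\ge |T_w|$.
\end{itemize}
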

\noindent
This readily implies Theorem~\ref{thm:Laman1} as follows. 
\begin{proof}[Proof of Theorem~\ref{thm:Laman1}]
Let $\M$ be a $2$-rigidity family where $K_{3,3}$ is not a circuit; we claim that $\M=\R$.	
By Proposition~\ref{prop:lamanmax} every $\M$-independent graph is $\R$-independent. To see that the converse also holds, suppose for a contradiction that there exists an $\R$-independent graph $A$, which is $\M$-dependent. Then a subgraph $C\subseteq A$ is an $\R$-independent $\M$-circuit. By Theorem~\ref{thm:short}, $C$ is 
	not short, so $C$ an $\R$-circuit, a contradiction. Therefore every $\R$-independent graph is $\M$-independent, and we conclude that $\M=\R$. 
\end{proof}	
\noindent
To prove Theorem~\ref{thm:short}, let $\M$ be as stated. Note that $K_{3,3}$ must be $\M$-independent, since $K_{3,3}$ with an edge removed is $2$-degenerate. Suppose for a contradiction that there exists a short $\M$-circuit.

We shall need the following graph parameter. For a graph $G$ and a vertex $v\in V(G)$ define $tw_G(v)$ to be the number of twins of $v$ (excluding $v$ itself). Define 
$$tw_3(G):=\max\{ tw_G(v): v\in V(G), \deg_G(v)=3\}.
$$ 
\noindent
Call a short $\M$-circuit $C$ \emph{critical} if $v(C)$ is minimal among all short $\M$-circuits, subject to this $|C|$ is minimal, and subject to the above $tw_3(G)$ is maximal. Clearly, if a short circuit exists, there exists a critical one. So, let $C$ be an arbitrary critical short $\M$-circuit. 
\begin{lemma}\label{lem:nolamaninside}
It is impossible that there is an $\R$-base $B\subseteq C$ such that $V(B)\neq V(C)$ and a vertex $v\in V(B)$ with $\deg_B(v)=\deg_C(v)=3$. 
\end{lemma}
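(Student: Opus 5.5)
The plan is to combine Lemma~\ref{lem:precircuit} with the circuit elimination axiom, and reach a contradiction with the vertex-minimality of the critical short circuit $C$. Suppose such $B$ and $v$ exist. By Lemma~\ref{lem:precircuit} there is an edge $e\in\binom{V(B)}{2}\setminus B$ with $v\notin e$ such that the fundamental $\R$-circuit $D$ of $B+e$ contains $v$.

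\textbf{Step 1: $D$ is an $\M$-circuit.} Since $D$ is $\R$-dependent, it is $\M$-dependent by Proposition~\ref{prop:lamanmax}, so it contains an $\M$-circuit $D_0$. If $D_0$ were short, then $v(D_0)\le v(B)<v(C)$, contradicting the minimality of $v(C)$. Hence $D_0$ is an $\R$-circuit inside the $\R$-circuit $D$, and therefore $D_0=D$.

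\textbf{Step 2: $v$ has the same edges in $D$ and $C$.} By Lemma~\ref{lem:subcubic}(1), $\deg_D(v)\ge 3$. Since $v\notin e$, every edge of $D$ at $v$ lies in $B$, and $B$ has exactly the three edges of $C$ at $v$. So $D$ and $C$ share exactly the same three edges at $v$; in particular they share an edge $f\ni v$. Also $e\notin C$: otherwise $D\subseteq C$ would be an $\M$-circuit properly contained in $C$ (proper because $V(D)\subseteq V(B)\neq V(C)$), which is impossible.

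\textbf{Step 3: elimination.} By circuit elimination (in $\M_N$ for $N$ large), $(C\cup D)-f$ is $\M$-dependent, so it contains an $\M$-circuit $C''$. In $(C\cup D)-f$ the vertex $v$ has degree $2$, so $v\notin C''$ by Lemma~\ref{lem:subcubic}(1). Since $D\subseteq B+e\subseteq C+e$, we get $C''\subseteq (C-v)+e$, and $v(C'')<v(C)$.

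\textbf{Step 4: $C''$ is short.} This is the main point. By Lemma~\ref{lem:shortcircuitlamanbase}, $C$ is $\R$-independent. Also $e$ lies in the $\R$-span of $B\subseteq C$, so $C+e$ contains exactly one $\R$-circuit, namely $D$. As $v\in D$, the graph $(C-v)+e$ is $\R$-independent, and hence so is $C''$. Thus $C''$ is an $\M$-circuit that is not an $\R$-circuit, i.e.\ a short $\M$-circuit with fewer vertices than $C$. This contradicts the criticality of $C$.

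The only delicate steps are checking that $D$ is an $\M$-circuit (Step~1) and verifying the $\R$-independence of $(C-v)+e$ via uniqueness of the fundamental circuit (Step~4). The remaining criticality conditions (minimality of $|C|$ and maximality of $tw_3$) are not needed here.
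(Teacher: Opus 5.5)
Your proof is correct and follows essentially the same route as the paper. Both arguments show $D$ is an $\M$-circuit, derive that $C+e-f$ (which coincides with your $(C\cup D)-f$) is $\M$-dependent, discard $v$ via the $0$-extension property, and reach a contradiction from the $\R$-independence of $C$ combined with the minimality of $v(C)$. The differences are cosmetic: you get the dependence of $C+e-f$ by eliminating $f$ between $C$ and $D$ rather than by the paper's fundamental-circuit argument, and you phrase the last step as uniqueness of the fundamental $\R$-circuit of $C+e$ (so the smaller circuit must be short) instead of eliminating $e$ between the two $\R$-circuits $D$ and $S$.
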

\begin{proof}
	Suppose for a contradiction that we have $B$ and $v$ as stated. Note that $B$ is $\M$-independent since it is a proper subset of $C$. Moreover, by Lemma~\ref{lem:precircuit}, there is a missing edge $e\in \binom{V(B)}{2}\setminus B$ with $v\notin e$ such that there is an $\R$-circuit $D\subseteq B+e$ containing $v$ and $e$. Note that $e\notin C$, as otherwise we would have $D\subseteq C$, contradicting Lemma~\ref{lem:shortcircuitlamanbase}. 
	
	Observe that $D$ is $\M$-dependent by Proposition~\ref{prop:lamanmax}. Furthermore, $D$ cannot contain an $\M$-circuit $T$ as a proper subset. Indeed, $T$ would either be an $\R$-circuit or a short $\M$-circuit. The former cannot happen since $D$ itself is an $\R$-circuit. While the latter would imply $T$ is a short $\M$-circuit with 
	$$v(T)\leq v(D)\leq v(B)<v(C),
	$$  
	contradicting the minimality of $v(C)$ assumption. 
	Hence, $D$ is an $\M$-circuit. 
	
	Let $f$ be one of the three edges incident with $v$ in $B$ and $C$. Suppose $A = C+e-f$ is $\M$-independent. Then 
	$$A+f=C+e\supseteq D,$$
	meaning that $D$ is the fundamental $\M$-circuit of $A+f$. Since $e\in D$, it follows that $A+ f-e=C$ is $\M$-independent, a contradiction. We conclude that $A=C+e- f$ is $\M$-dependent. Note however that in $A$ the vertex $v$ has degree $2$. Hence, by the $0$-extension property, $A-v=C+e-v$ is $\M$-dependent.
	
	Thus, there exists an $\M$-circuit $S\subseteq C+e-v$. Note that $e\in S$, as otherwise $S\subsetneq C$, which is impossible for two $\M$-circuits. Moreover, since $v(S)<v(C)$, by criticality of $C$, $S$ cannot be a short $\M$-circuit, and so $S$ is an $\R$-circuit. 
	
	To summarize, $D$ and $S$ are $\M$ circuits, $\R$-circuits, $e\in D\cap S$, and they are distinct since $v$ is a vertex in $D$ but not in $S$. Hence, we may apply the circuit elimination axiom in $\M$ and in $\R$, to conclude that $(D\cup S) - e \subseteq C$ is $\M$-dependent and $\R$-dependent. Since $C$ is an $\M$-circuit, it follows that 
	$$(D\cup S) - e = C.$$ 
	So, $C$ is $\R$-dependent. However, $C$ is a short $\M$-circuit, which by Lemma~\ref{lem:shortcircuitlamanbase} is $\R$-independent, a contradiction.
\end{proof}

\begin{lemma}\label{lem:notwotwins}
$tw_3(C)\leq 1$.
\end{lemma}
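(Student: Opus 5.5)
Suppose for a contradiction that some degree-$3$ vertex $v$ of $C$ has two twins $v',v''$, so $v,v',v''$ share the neighbourhood $N=\{x,y,z\}$. The plan is to use these three vertices and their common neighbours to exhibit a proper $\R$-base inside $C$. Such a base would contain a vertex of degree $3$ in both the base and $C$, which is exactly what Lemma~\ref{lem:nolamaninside} forbids.

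The natural candidate is $K_{3,3}$ on $\{v,v',v''\}\cup N$. This is where the hypothesis on $K_{3,3}$ enters. By Lemma~\ref{lem:subcubic}(2), $K_{3,3}$ minus an edge is independent; since $K_{3,3}$ is not an $\M$-circuit, $K_{3,3}$ itself is $\M$-independent. It is also an $\R$-base: it has $9=2\cdot 6-3$ edges, and every proper subset of $m$ vertices induces at most $2m-3$ edges.

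Now let $B$ be the subgraph of $C$ induced on $\{v,v',v''\}\cup N$. I split according to whether this vertex set is all of $V(C)$.

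If $V(B)\neq V(C)$, first consider the case that $N$ is independent in $C$. Then $B=K_{3,3}$ is an $\R$-base with $V(B)\ne V(C)$, and $\deg_B(v)=\deg_C(v)=3$. This contradicts Lemma~\ref{lem:nolamaninside} directly.

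If instead $N$ spans edges, $B$ is $K_{3,3}$ plus up to three edges. One edge gives $10=2\cdot6-2$ edges, and since $K_{3,3}$ minus an edge plus that edge is a $0$-extension-type sparse graph, $B$ is $\R$-dependent. This is a subset of $C$ with fewer vertices than $C$. By Proposition~\ref{prop:lamanmax} it is then $\M$-dependent, contradicting that $C$ is a circuit with $V(B)\subsetneq V(C)$. To avoid that subtlety, I will simply take $B'$ to be $K_{3,3}$ itself (just the bipartite edges). It is still an $\R$-base with $V(B')\neq V(C)$, and $\deg_{B'}(v)=3=\deg_C(v)$, so Lemma~\ref{lem:nolamaninside} applies regardless of edges inside $N$. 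This handles every case with $V(C)\supsetneq \{v,v',v''\}\cup N$.

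The remaining case is $V(C)=\{v,v',v''\}\cup N$, so $v(C)=6$. Then $C$ is $K_{3,3}$ plus some edges within $N$. Lemma~\ref{lem:shortcircuitlamanbase} gives $|C|\le 9$, so $C=K_{3,3}$. But $K_{3,3}$ is not an $\M$-circuit by hypothesis, a contradiction.

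I expect the main obstacle to be making sure Lemma~\ref{lem:nolamaninside} applies verbatim, in particular that $B'\subseteq C$ need not be induced. The lemma only asks for $B\subseteq C$, an $\R$-base, with $V(B)\neq V(C)$ and one vertex of equal degree $3$, so this seems fine. The $K_{3,3}$ hypothesis is used only in the final six-vertex case.
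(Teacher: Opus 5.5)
Your proof is correct and is essentially the paper's argument. You take the bipartite $K_{3,3}$ on $\{v,v',v''\}\cup N$ as an $\R$-base subgraph, use Lemma~\ref{lem:nolamaninside} to force $V(C)$ to be exactly these six vertices, and then use Lemma~\ref{lem:shortcircuitlamanbase} to get $|C|\le 9$, so $C=K_{3,3}$, contradicting the hypothesis. The preliminary case split on edges inside $N$, including the garbled ``$0$-extension-type sparse graph'' remark, is unnecessary: as you yourself conclude, the non-induced $K_{3,3}$ subgraph already handles every case.
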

\begin{proof}
	If some degree $3$ vertex $v$ has at least two twins, then $C$ contains a copy $B$ of $K_{3,3}$ (an $\R$-base) as a subgraph, with $v\in V(B)$ and $\deg_B(v)=\deg_C(v)=3$. By Lemma~\ref{lem:nolamaninside} this can only happen if $V(B)=V(C)$, meaning $v(C)=v(B)=6$. By Lemma~\ref{lem:shortcircuitlamanbase}, we obtain $$|C|\leq 2v(C)-3=9=|B|,$$ 
	and so $C=B$, contradicting our assumption that $K_{3,3}$ not an $\M$-circuit. 	
\end{proof}
\noindent
Let $v$ be a degree $3$ vertex of $C$ with $tw_C(v)=tw_3(C)$. By Lemma~\ref{lem:shortcircuitlamanbase} and Lemma~\ref{lem:notwotwins}, there exists a degree $3$ vertex $w\in V(C)$ which is neither $v$, nor its neighbour or twin. Fix an arbitrary such $w$. Define $$C'=(C\oplus v)-w,$$ 
and note that $v(C')=v(C)$ and $|C'|=|C|$. Let $v'$ be the new twin of $v$ in $C'$. Since the twin class of $v$ has been enlarged by $v'$, and by the choice of $v$, we have
$$tw_3(C')\geq tw_{C'}(v)\geq tw_C(v)+1=tw_3(C)+1>tw_3(C).$$
\begin{lemma}\label{lem:cprimeshort}
	$C'$ is a short $\M$-circuit. 
\end{lemma}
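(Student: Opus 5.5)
We show first that $C'$ is $\M$-dependent, then that it contains an $\M$-circuit which is short and equal to $C'$ itself.

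\textbf{Dependence.} We apply Lemma~\ref{lem:tripling}(2) to the $\M$-circuit $C$ with the degree $3$ vertex $v$ and the degree $3$ vertex $w$, which is not adjacent to $v$. This gives that $C'=(C\oplus v)-w$ is $\M$-dependent. Hence $C'$ contains some $\M$-circuit $T\subseteq C'$.

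\textbf{$T$ is short.} The graph $C'$ is $\R$-independent. Indeed, if $C'$ were $\R$-dependent it would contain an $\R$-circuit. We will instead argue via counting, avoiding any structure of $\R$-circuits. Suppose $T$ is an $\R$-circuit. By Lemma~\ref{lem:shortcircuitlamanbase}, $C$ is $\R$-independent, and $C-w$ is then $\R$-independent too. The twin $v'$ has degree $3$ in $C'$ with the same neighbours as $v$. If $v'\notin V(T)$, then $T\subseteq C-w\subseteq C$, which is impossible for an $\R$-circuit. So $v'\in V(T)$, and by symmetry (swap $v,v'$) also $v\in V(T)$, since otherwise $T$ is isomorphic to a subgraph of $C-w$. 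Now apply the $\R$ counting to $T$: removing $v'$ gives $T-v'\subseteq C-w$, which is $\R$-independent, so $|T-v'|\le 2(v(T)-1)-3$. On the other hand, $|T|=2v(T)-2$ and $\deg_T(v')\le 3$, so $|T-v'|\ge 2v(T)-5$. This forces $T-v'$ to be an $\R$-base containing $v$ with $\deg_{T-v'}(v)=3=\deg_C(v)$.

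If $V(T-v')\ne V(C)$, this contradicts Lemma~\ref{lem:nolamaninside}, since $T-v'\subseteq C$ and $T-v'$ is an $\R$-base. If $V(T-v')=V(C)$, then $w\in V(T)\subseteq V(C')$, contradicting $w\notin V(C')$. Either way $T$ is not an $\R$-circuit, so $T$ is a short $\M$-circuit.

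\textbf{$T=C'$.} By criticality of $C$ we have $v(T)\ge v(C)=v(C')$, so $V(T)=V(C')$. Next, $|T|\ge|C|=|C'|$ (criticality applied among circuits with the same vertex count), so $T=C'$. Hence $C'$ is a short $\M$-circuit.

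The main obstacle is the middle step, ruling out that the circuit inside $C'$ is an $\R$-circuit. The plan there is to use the twin $v'$ and degree counting to extract an $\R$-base inside $C$ on which Lemma~\ref{lem:nolamaninside} bites. The delicate point is checking that $\deg_{T-v'}(v)=3$, which needs $v$ to keep all its neighbours in $T-v'$. Since $v$ and $v'$ are not adjacent, removing $v'$ does not lower $\deg(v)$. What still has to be justified is that $\deg_T(v)=3$, that is, that $v$ keeps all three edges in $T$. This follows because an $\R$-circuit has minimum degree $3$ by Lemma~\ref{lem:subcubic}(1), while $\deg_{C'}(v)=3$.
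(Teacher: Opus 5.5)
Your proof is correct and follows the paper's route. The tripling trick gives dependence, the circuit inside $C'$ must contain both $v$ and $v'$, deleting $v'$ from a would-be $\R$-circuit leaves an $\R$-base inside $C-w$ that contradicts Lemma~\ref{lem:nolamaninside}, and criticality then forces $T=C'$. Your use of $\R$-independence of $C$ plus a degree count, where the paper uses $\M$-independence of $C'-v'$ and Lemma~\ref{lem:subcubic}, is an inessential difference. Your opening claim that $C'$ is $\R$-independent is never proved or used, so just delete it.
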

\begin{proof}
By Lemma~\ref{lem:tripling}, $C'$ is $\M$-dependent.
Consider an $\M$-circuit $D'\subseteq C'$. Since $C'-v'\subsetneq C$ is $\M$-independent we must have $v'\in V(D')$ and, by Lemma~\ref{lem:subcubic}, all three edges of $C$ incident with $v'$ must be in $D'$. Since $C'-v$ is isomorphic to $C'-v'$, we similarly have $v\in V(D')$ and all three edges of $C$ incident with $v$ are also in $D'$. 

Let $B=D' - v'$. Since $B\subsetneq D'$, it is $\M$-independent, and so, by Proposition~\ref{prop:lamanmax}, it is also $\R$-independent. Moreover, we have $v\in V(B)$.

If $D'$ is an $\R$-circuit then 
$$|B|=|D'|-3=2v(D')-2-3= 2(v(D')-1)-3=2v(B)-3,
$$
and $B$ is $\R$-independent, so $B\subseteq C$ is an $\R$-base. Note however that 
$$B=D'-v'\subseteq C'-v'=C-w,$$
so $V(B)\neq V(C)$. Moreover, $v\in V(B)$ and $\deg_B(v)=\deg_C(v)=3$. This is impossible by Lemma~\ref{lem:nolamaninside}. Hence, $D'$ is not an $\R$-circuit. 
	
So, $D'$ is a short $\M$-circuit. Since $C$ is critical, by minimality of $v(C)$ we must have $$v(D')\geq v(C)=v(C'),$$ which means $V(D')=V(C')$, as $D'\subseteq C'$. By minimality of $|C|$ subject to that, we must have $|D'|\geq |C|=|C'|$, so $D'=C'$. Thus $C'$ is a short $\M$-circuit, as claimed.
\end{proof}
\noindent
In summary, $C'$ is a short $\M$-circuit with $v(C')=v(C)$, $|C'|=|C|$ and $tw_3(C')>tw_3(C)$, contradicting criticality of $C$. This completes the proof of Theorem~\ref{thm:short}. 

\section{Cubic graphs}\label{sec:C}
\noindent
In this section we prove Theorem~\ref{thm:cubic}. We shall need the following fact. 
\begin{prop}[\cite{Jac24}, Lemma 6.11]\label{prop:circuits2conn}
	If $G$ is a circuit in some $2$-rigidity matroid then $G$ is $2$-connected. 
\end{prop}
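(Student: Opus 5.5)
I would prove that a circuit $G$ of a $2$-rigidity matroid $\M_n$ is $2$-connected by contradiction, splitting into two cases: $G$ is disconnected, or $G$ is connected with a cut vertex. Both cases rest on two facts: $\M_n$-circuits have minimum degree at least $3$ (the $0$-extension property, as in Lemma~\ref{lem:subcubic}(1), which works at the level of a single matroid), and $\M_n$ has rank $2n-3$ and contains every $K_4$ as a circuit. The common goal is to show that a circuit cannot split along a separation of order at most one.

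\emph{Disconnected case.} Suppose $G=G_1\cup G_2$ with $V(G_1)\cap V(G_2)=\emptyset$ and both parts nonempty. Since $G$ is a circuit, $G_1$ and $G_2$ are independent and $r(G)=|G|-1=r(G_1)+r(G_2)-1$. So it suffices to show that the rank is additive on vertex-disjoint graphs, $r(G_1\cup G_2)=r(G_1)+r(G_2)$. I would derive this by a counting argument. Extend $G_i$ to a basis $B_i$ of $\M_n$ restricted to $K_{V(G_i)}$. Every $2$-rigidity matroid restricted to the complete graph on $m$ vertices has rank $2m-3$; this follows from the $K_4$-circuit/closure property, since the complete graph on any vertex set is spanned by a $0$-extension sequence (a "triangle-fan" Laman graph). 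Now join $B_1$ and $B_2$ by three edges forming a $2$-degenerate-type connection, for example a $0$-extension chain, to get a spanning independent set of size $(2m_1-3)+(2m_2-3)+3=2(m_1+m_2)-3$. Independence of $B_1\cup B_2$ together with the connectors would use the $0$-extension property after reordering. I expect this to be the delicate step, since $0$-extensions only add new vertices. So instead I would prove the additivity directly: the closure of $B_1\cup B_2$ can be shown to have rank exactly $r(B_1)+r(B_2)$ by repeatedly adding vertices of $V_2$ by $0$-extensions onto a spanning structure of $V_1$.

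\emph{Cut-vertex case.} Suppose $G=G_1\cup G_2$ with $V(G_1)\cap V(G_2)=\{x\}$. The same strategy applies, with the target identity $r(G_1\cup G_2)=r(G_1)+r(G_2)$ for a one-vertex union. The cleanest route is a vertex-splitting trick. Replace $x$ in $G_2$ by a new vertex $x'$ to obtain a vertex-disjoint union, which falls under the previous case, and then argue that identifying $x$ with $x'$ does not decrease the rank. Alternatively, I would reduce to the disconnected case via a $0$-extension: if $G$ were a circuit, pick a neighbour $y$ of $x$ in $G_2$ and consider how the edges at $x$ behave. This is the step I regard as the main obstacle, since gluing at a vertex is not obviously rank-additive in an abstract matroid.

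If the direct matroid argument stalls, I would fall back on citing the structure theory of Graver's abstract rigidity matroids, namely that $2$-sums and $1$-sums behave as in the generic case (see \cite{Jac24}), which is precisely where this fact is proved.
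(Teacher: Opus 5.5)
\textbf{There is a genuine gap.} Your reduction is correct: for a split $G=G_1\cup G_2$ with $|V(G_1)\cap V(G_2)|\le 1$, it suffices to contradict $r(G)=|G|-1$ using that $G_1$ and $G_2$ are independent. However, you never give the mechanism that proves the additivity, and the cut-vertex case stays open.

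\emph{Disconnected case.} Your first attempt is circular: independence of $B_1\cup B_2$ plus three connectors already contains the claim, and $0$-extensions cannot glue two existing pieces. The closure remark is only asserted. Moreover, adding the vertices of $V_2$ by $0$-extensions onto $V_1$ uses cross edges, which need not lie in $\mathrm{cl}(B_1\cup B_2)$.

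\emph{Cut-vertex case.} ``Identifying $x$ with $x'$ does not decrease the rank'' is not a matroid fact you can appeal to. It is exactly the one-vertex additivity you are trying to prove. The splitting may also be unavailable: a single matroid $\M_n$ lives on $\binom{[n]}{2}$ and has no spare vertex $x'$ when $V(G)=[n]$. Falling back on citing \cite{Jac24} is not a proof.

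\textbf{Missing idea.} The paper indicates this idea when it says Lemma~\ref{lem:2circuitspartition} uses ``the same proof idea'': apply submodularity against the complete graph $K$ on $V(G_2)$. Since $G_1$ has no edge inside $V(G_2)$, we have $G\cap K=G_2$, so
\[
r(G_1\cup K)=r(G\cup K)\le r(G)+r(K)-r(G_2)=(|G|-1)+(2v(G_2)-3)-|G_2|=|G_1|+2v(G_2)-4 .
\]
On the other hand, $G_1\cup K$ contains an independent set of size $|G_1|+2v(G_2)-3$, built as follows.
\begin{enumerate}
\item Start from the independent graph $G_1$.
\item Add one edge of $K$ with an endpoint outside $V(G_1)$: $xy$ with $y\in V(G_2)\setminus\{x\}$ in the cut-vertex case, or any edge of $K$ in the disjoint case. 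This keeps independence, since a circuit through it would have a vertex of degree $1$.
\item Add the remaining $v(G_2)-2$ vertices of $V(G_2)$ one at a time by $0$-extensions, using only edges of $K$.
\end{enumerate}
The two bounds contradict each other. This handles both cases at once and needs no extra vertex. It uses only $r(K)=2v(G_2)-3$ and the $0$-extension property, both of which you already had.

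In your closure language, the same argument reads: a basis $B_2\supseteq G_2$ of $K$ gives $\mathrm{cl}(G_1\cup B_2)\supseteq G_1\cup K$. The $0$-extension structure must then be built inside $V(G_2)$, rooted at $x$, and not attached to $V(G_1)$.
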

\noindent
Using the same proof idea, we show below that circuits of connectivity $2$ are generated by smaller circuits via circuit elimination (we suspect this fact to be known, but could not find it stated explicitly in the literature).
\begin{lem}\label{lem:2circuitspartition}
Let $G$ be a circuit in some $2$-rigidity matroid, and suppose that there are vertices $u,w\in V(G)$ and subgraphs $X,Y\subseteq G$ such that $|X|\geq 2$, $|Y|\geq 2$, $X$ and $Y$ partition $G$, and $V(X)\cap V(Y)=\{u,w\}$. Then $e=\{u,w\}$ is not an edge of $G$. Morevoer, $X+e$ and $Y+e$ are circuits. 
\end{lem}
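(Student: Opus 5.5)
The plan is a rank/closure argument in the $2$-rigidity matroid $\M_n$ containing $G$ as a circuit. Let $A=V(X)$ and $B=V(Y)$, so that $A\cap B=\{u,w\}$. Set $a=|A|$ and $b=|B|$.

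\textbf{Step 1 (gluing lemma).} First I will show that for any base $B_A$ of $K_A$ and any base $B_B$ of $K_B$ with $e\in B_B$, the set $B_A\cup(B_B-e)$ is a base of $K_{A\cup B}$.

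Write $\mathrm{cl}$ for the closure operator of $\M_n$, and $r$ for its rank function.

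The ingredients are that the rank of $K_m$ is $2m-3$ and that $K_B$ has a Henneberg $0$-extension base grown from the single edge $e$. The first holds because $\M_n$ restricted to $\binom{[m]}{2}$ is a rigidity matroid: $K_m$ has a $2$-degenerate spanning set, and $K_4$-circuits put every remaining edge in its closure. For the second, attach each vertex of $B\setminus\{u,w\}$ in turn by two edges to earlier vertices.

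For the spanning part, $B_A$ spans $K_A$, and hence spans $e$. Adding the vertices of $B\setminus A$ by these $0$-extensions on top of $K_A$ gives a set of size
$$(2a-3)+2(b-2)=2(a+b-2)-3=r\big(K_{A\cup B}\big).$$
This set is independent by the $0$-extension property. Its closure contains $K_A$, and by the same $K_4$-argument it contains the whole of $K_{A\cup B}$. It follows that
$$\mathrm{cl}\big(B_A\cup(B_B-e)\big)\supseteq \mathrm{cl}\big(B_A\cup(B_B-e)+e\big)\supseteq K_A\cup K_B,$$
whose closure is $K_{A\cup B}$. The size count $(2a-3)+(2b-3)-1$ equals the rank, so $B_A\cup(B_B-e)$ is a base. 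I expect this gluing step to be the main obstacle: one must be careful that only the rigidity-matroid axioms (rank and $K_4$-circuits, plus the $0$-extension property) are used.

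\textbf{Step 2 ($e\notin G$).} Suppose $e\in G$, say $e\in X$. Then $G-e$ is independent and splits as $(X-e)\cup Y$.

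Suppose $e\notin\mathrm{cl}(X-e)$ and $e\notin\mathrm{cl}(Y)$. Extend $X-e+e=X$ to a base $B_A$ of $K_A$, and extend $Y+e$ to a base $B_B$ of $K_B$. Step 1 then makes $B_A\cup(B_B-e)\supseteq G$ independent, a contradiction.

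Hence $e\in\mathrm{cl}(X-e)$ or $e\in\mathrm{cl}(Y)$. The first gives a circuit inside $X\subsetneq G$. The second gives a circuit inside $Y+e\subsetneq G$, which is proper because $|X|\geq 2$. Either contradicts minimality of $G$, so $e\notin G$.

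\textbf{Step 3 (both $X+e$ and $Y+e$ are dependent).} Now $X$ and $Y$ are independent, being proper subsets of $G$.

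Suppose both $X+e$ and $Y+e$ were independent. Extend them to bases of $K_A$ and $K_B$ containing $e$; Step 1 then makes a superset of $G$ independent, which is impossible.

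So without loss of generality $e\in\mathrm{cl}(X)$. Suppose $Y+e$ were independent. Take a base $B_A\supseteq X$ of $K_A$, which spans $e$, and a base $B_B\supseteq Y+e$ of $K_B$. Step 1 makes $B_A\cup(B_B-e)\supseteq G$ independent, again impossible. Hence both $X+e$ and $Y+e$ are dependent.

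\textbf{Step 4 (they are circuits).} Since $X$ and $Y$ are independent, there are circuits $C_X\subseteq X+e$ and $C_Y\subseteq Y+e$, both containing $e$. These circuits are distinct, because $X$ and $Y$ are disjoint and each circuit has at least one edge other than $e$.

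Circuit elimination gives a circuit inside $(C_X\cup C_Y)-e\subseteq G$. Since $G$ is a circuit, this forces $(C_X\cup C_Y)-e=G$. Therefore $C_X=X+e$ and $C_Y=Y+e$, so both are circuits.
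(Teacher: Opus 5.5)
Your proof is correct. It uses the same two ingredients as the paper, $r(K_m)=2m-3$ and the $0$-extension property, but organises the rank count differently.

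The paper works with a single complete graph $K$ on $V(Y)$ and applies submodularity to $G$ and $K$. Since $G$ is a circuit and $(X\cup Y)\cap K=Y$, this gives $r(X\cup K)\le |X|+2v(Y)-4$. On the other hand, $0$-extensions on top of the independent graph $X+e$ produce an independent subset of $X\cup K$ of size $|X|+2v(Y)-3$. The same two lines handle both the case $e\in G$ and the case $e\notin G$.

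You instead prove a gluing lemma: bases of $K_A$ and $K_B$ sharing $e$ combine, after deleting $e$, into a base of $K_{A\cup B}$. Basis extension then shows that if $X$ and $Y+e$ are both independent, so is $G=X\cup Y$. The paper's version is shorter. Yours makes the $2$-sum structure and the symmetry between $X$ and $Y$ explicit, and avoids rank inequalities entirely.

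Your write-up can also be trimmed in three places:
\begin{itemize}
\item In Step~2, the case $e\in\mathrm{cl}(X-e)$ is vacuous, because $X\subsetneq G$ is independent.
\item In Step~3, the preliminary ``both independent'' case and the reduction to $e\in\mathrm{cl}(X)$ are unnecessary. Your final argument never uses $e\in\mathrm{cl}(X)$, since every base of $K_A$ spans $e$, so it already shows directly that $Y+e$, and symmetrically $X+e$, cannot be independent.
\item In Step~1, the $K_4$-argument is not needed: an independent subset of $K_{A\cup B}$ of size $r(K_{A\cup B})$ automatically spans it.
\end{itemize}

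Your Step~4 is the same circuit-elimination argument the paper uses.
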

\begin{proof}
Suppose first that $e\in G$, and without restriction assume that $e\in Y$. 
Let $K$ be the complete graph on $V(Y)$. For the rank function $r$ of the given $2$-rigidity matroid we have $$r(K)=2v(Y)-3.$$
Since $X\cup Y$ is a circuit and $Y\subseteq K$, we have 
\begin{align}\label{eq:XandK}
r(X\cup K)&=r((X\cup Y)\cup K)\leq r(X\cup Y)+r(K)-r((X\cup Y)\cap K) \nonumber\\
&=r(X\cup Y)-r(Y)+r(K) =(|X|+|Y|-1)-|Y|+2v(Y)-3\nonumber \\
&=|X|+2v(Y)-4.
\end{align}
On the other hand, $X+e\subsetneq X\cup Y$, as $|Y|\geq 2$, which means $X+e$ is independent. So, we may successively apply $0$-extensions to all remaining vertices in $Y,$ resulting in an independent subgraph of $X\cup K$ of size 
$$|X|+1+2(v(Y)-2)=|X|+2v(Y)-3,$$
which is a contradiction. So, $e$ cannot be an edge of $G$.

Suppose now that $e\notin G$ and that $X+e$ is independent. 
Let $K$ be the complete graph on $V(Y)$ as before, and since $Y\subseteq K$, we again have~\eqref{eq:XandK}. On the other hand, $X+e$ is independent by assumption, and we may again successively apply $0$-extensions to all remaining vertices in $Y,$ resulting in an independent subgraph of $X\cup K$ of size 
$$|X|+1+2(v(Y)-2)=|X|+2v(Y)-3,$$
again a contradiction. Hence, $X+e$ is dependent, and so is $Y+e$ by symmetry. 

Therefore, there exist circuits $X'\subseteq X+e$ and $Y'\subseteq Y+e$, and note that $e$ is an edge in both, since $X$ and $Y$ are independent. So, we can write $X'=X''+e$ and $Y'=Y''+e$, where $X''\subseteq X$ and $Y''\subseteq Y$. 
Suppose for a contradiction that $X''\neq X$. Then applying circuit elimination with $X',Y'$ and $e$ gives that $X''\cup Y''$ is dependent, a contradiction since it is a proper subgraph of the circuit $G=X\cup Y$. So, we must have $X''=X$, meaning $X+e$ is a circuit. Symmetrically, $Y+e$ is also a circuit. 
\end{proof}
\begin{remark}
	From this it follows that the converse also holds. Suppose $X+e$ and $Y+e$ are circuits in a $2$-rigidity matroid, with $X\cap Y=\emptyset$ and $V(X)\cap V(Y)=e$. By circuit elimination, $X\cup Y$ is dependent, so there is a circuit $X'\cup Y'$ for some $X'\subseteq X$ and $Y' \subseteq Y$ with	
	$|X'|,|Y'|\geq 2$. Hence, by Lemma~\ref{lem:2circuitspartition}, $X'+e$ and $Y'+e$ are circuits. This implies $(X,Y)=(X',Y')$, and so $X\cup Y$ is a circuit. 
\end{remark}
\noindent
Let $\M$ be a $2$-rigidity family and suppose for a contradiction that $H$ is an $\M$-dependent connected cubic graph different from $K_4$ and $K_{3,3}$. Suppose further that, given $\M$, $H$ has the smallest number of vertices among all such graphs. Note that, by Lemma~\ref{lem:subcubic}, $H$ is an $\M$-circuit. Hence, as a consequence of Proposition~\ref{prop:circuits2conn} and Lemma~\ref{lem:2circuitspartition} we obtain
\begin{lem}\label{lem:3conn}
$H$ is $3$-connected. 
\end{lem}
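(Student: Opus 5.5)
We need to show that $H$ is $3$-connected. Here $H$ is a minimal counterexample: an $\M$-circuit that is connected and cubic, with $H\neq K_4,K_{3,3}$.

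First, by Proposition~\ref{prop:circuits2conn}, $H$ is $2$-connected. Suppose for a contradiction that $H$ has a $2$-vertex cut $\{u,w\}$. Split $V(H)\setminus\{u,w\}$ into two nonempty parts $P$ and $Q$, where $P$ is the vertex set of one component of $H-\{u,w\}$ and $Q$ is the union of the rest. Let $X$ consist of the edges of $H$ with at least one endpoint in $P$, and let $Y$ be all remaining edges. If $uw$ happens to be an edge, it lies in $Y$. Then $X$ and $Y$ partition $H$ and $V(X)\cap V(Y)=\{u,w\}$. Since $H$ is $2$-connected, both $u$ and $w$ have neighbours in $P$ and in $Q$, so $|X|,|Y|\geq 2$. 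Lemma~\ref{lem:2circuitspartition} therefore applies. It gives that $uw\notin H$, and that $X+e$ and $Y+e$ are both $\M$-circuits, where $e=uw$.

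Next, look at degrees. Each of $u$ and $w$ has degree $3$ in $H$ and has neighbours on both sides. So in one of $X,Y$ it has degree $1$, and in the other degree $2$. By Lemma~\ref{lem:subcubic}(1), every vertex of an $\M$-circuit has degree at least $3$. In $X+e$ the vertex $u$ has degree $\deg_X(u)+1$, so we need $\deg_X(u)=2$. By the same argument applied to $Y+e$ we need $\deg_Y(u)=2$. But $\deg_X(u)+\deg_Y(u)=3$, a contradiction. Hence $H$ has no $2$-vertex cut and is $3$-connected.

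The minimality of $H$ is not needed here; the degree count alone finishes the proof. The only delicate points are in the second paragraph: checking $|X|,|Y|\geq 2$ and placing a possible edge $uw$ correctly. Both are handled there, since Lemma~\ref{lem:2circuitspartition} itself excludes the edge $uw$.
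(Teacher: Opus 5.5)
Your proof is correct and follows the paper's argument. You get $2$-connectivity from Proposition~\ref{prop:circuits2conn}, split $H$ at a $2$-cut $\{u,w\}$, apply Lemma~\ref{lem:2circuitspartition} to get that $X+e$ and $Y+e$ are circuits, and finish with a degree count at $u$. The paper instead assumes ``without restriction'' that $u$ has one neighbour in $X$ and invokes Lemma~\ref{lem:subcubic}(2) rather than part (1), but this is the same contradiction; your explicit check of $|X|,|Y|\geq 2$ and your placement of a possible edge $uw$ are, if anything, more careful.
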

\begin{proof}
By Proposition~\ref{prop:circuits2conn}, $H$ is $2$-connected. Suppose $H$ has a separating set of $2$ vertices $e=\{u,w\}$. Let $J$ be a component of $H-u-w$, let $X=H[V(J)\cup \{u,w\}]$, and let $Y=H\setminus X$.
Then $u,w$ and $X,Y$ are as in Lemma~\ref{lem:2circuitspartition}. Since $H$ is cubic, without restriction we may assume that $u$ has exactly one neighbour in $V(X)$, and note that, by $2$-connectivity, $w$ has at most two neighbours in $V(X)$. 

By Lemma~\ref{lem:2circuitspartition}, we have $e\notin H$ and $X+e$ is an $\M$-circuit. However, $X+e$ is a connected properly subcubic graph, which makes it $\M$-independent by Lemma~\ref{lem:subcubic} -- a contradiction.
\end{proof}
\begin{lemma}\label{lem:2circuit2conn}
	$H$ has a vertex that has no twins. 
\end{lemma}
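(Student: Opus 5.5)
The plan is to argue by contradiction, using only that $H$ is connected, cubic and different from $K_{3,3}$. The argument is a parity count on twin classes and needs neither $3$-connectivity nor any matroid properties. So suppose every vertex of $H$ has at least one twin. Since being twins is an equivalence relation, $V(H)$ is then partitioned into twin classes, each of size at least $2$.

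\textbf{Step 1: twin classes have size exactly $2$.} Let $T$ be a twin class and let $N=\{x,y,z\}$ be the common neighbourhood of its members; these are three distinct vertices, since $H$ is cubic and simple. Every vertex of $N$ is adjacent to all of $T$, so $|T|\leq 3$. Suppose $|T|=3$. Then each of $x,y,z$ has its three neighbours exactly in $T$, so $T\cup N$ spans a copy of $K_{3,3}$ with all degrees already equal to $3$. This copy is therefore a connected component of $H$, and connectivity forces $H=K_{3,3}$, a contradiction. Hence every twin class has size exactly $2$.

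\textbf{Step 2: the parity contradiction.} Take a twin class $\{a,b\}$ with common neighbourhood $\{x,y,z\}$. By assumption $x$ has a twin $x'$. Since $x'$ has the same neighbours as $x$, it is adjacent to $a$. The neighbours of $a$ are exactly $x,y,z$, and $x'\neq x$, so $x'\in\{y,z\}$. The same reasoning applies to $y$ and $z$: each vertex of $\{x,y,z\}$ has a twin lying in $\{x,y,z\}$.

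It follows that $\{x,y,z\}$ is a union of twin classes. By Step 1 each class has size $2$, so $\{x,y,z\}$ would have even size. This contradicts $|\{x,y,z\}|=3$, and therefore some vertex of $H$ has no twins.

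\textbf{Main obstacle.} The only point needing care is Step 1, namely excluding a twin class of size $3$. This is exactly where the hypothesis $H\neq K_{3,3}$, together with connectivity, enters. Once that case is removed, the odd size of a neighbourhood in a cubic graph finishes the argument immediately.
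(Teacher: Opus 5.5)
Your proof is correct and is essentially the paper's argument. The key point in both is that every twin of a neighbour of a vertex $a$ is itself adjacent to $a$, so $N(a)$ is a union of twin classes; since $|N(a)|=3$, either some class is a singleton or $N(a)$ is a single class, which forces $K_{3,3}$. Your Step 1 and the choice of $a$ inside a twin class $\{a,b\}$ merely reorganise this, and any vertex would serve as $a$.
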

\begin{proof}
	Take an arbitrary vertex $w$ and let $u_1,u_2,u_3$ be its neighbours. Note that the twin class of each $u_i$ must be a subset of $\{u_1,u_2,u_3\}$ as every twin of $u_i$ has to be a neighbour of $w$. The only way this can be achieved without a singleton twin class is when $\{u_1,u_2,u_3\}$ form a single twin class. In this case however, $H$ would contain $K_{3,3}$ as a subgraph, which is impossible for a connected cubic graph different from $K_{3,3}$.  
\end{proof}
\noindent
So, let $v\in V(H)$ be a vertex which does not have twins. Let $u_1,u_2,u_3$ be the neighbours of $v$. Since $H$ is cubic and $H\neq K_4, K_{3,3}$, there exists an edge $e=\{w_1,w_2\}\in H$, with $\{w_1,w_2\}\cap \{v,u_1,u_2,u_3\}=\emptyset$. Let $G=(H\oplus v)-e$. By Lemma~\ref{lem:tripling}, $G$ is $\M$-dependent.
\begin{lemma}\label{lem:cubicnoK4K33}
$G$ does not have a $K_4$ or $K_{3,3}$ subgraph. 
\end{lemma}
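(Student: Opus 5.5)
We argue directly from the structure of $G=(H\oplus v)-e$. Let $v'$ be the new twin of $v$, so that $N_G(v)=N_G(v')=\{u_1,u_2,u_3\}$. Both $v$ and $v'$ have degree $3$. Each $u_i$ has degree $4$ in $G$: it has two neighbours in $H$ other than $v$, plus $v$ and $v'$. The endpoints $w_1,w_2$ have degree $2$, and every other vertex has degree $3$. All subgraph degrees below are bounded by these values.

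First we rule out a subgraph $K$ isomorphic to $K_4$. Vertices of $K$ need degree at least $3$ in $G$, so $w_1,w_2\notin V(K)$. If $K$ avoids both $v$ and $v'$, then $K\subseteq H$. Since $H$ is cubic, a $K_4$ subgraph would be a whole component, forcing $H=K_4$ by connectivity, which is excluded. Now $v$ and $v'$ are non-adjacent, so at most one of them lies in $K$; by symmetry say $v\in V(K)$. Then the other three vertices of $K$ are exactly $u_1,u_2,u_3$, and they span a triangle. But $v,u_1,u_2,u_3$ is then a $K_4$ in $H$, again forcing $H=K_4$.

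Next we rule out a subgraph $K$ isomorphic to $K_{3,3}$; again $w_1,w_2\notin V(K)$. If neither $v$ nor $v'$ lies in $K$, then $K\subseteq H$. Since $H$ is cubic, a $K_{3,3}$ subgraph is a component, so $H=K_{3,3}$, excluded. If exactly one of them, say $v$, lies in $K$, then all of $K\subseteq H$ and the same argument applies.

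The main case is when both $v$ and $v'$ lie in $K$. Being non-adjacent, they sit on the same side of $K$, and the opposite side must be contained in their common neighbourhood. Hence the opposite side is exactly $\{u_1,u_2,u_3\}$. The third vertex $x$ on the side of $v,v'$ is then adjacent to all of $u_1,u_2,u_3$, and $x\neq v,v'$. Since $x$ is not one of $w_1,w_2$ and is not $v'$, the edges $xu_i$ belong to $H$ (the only deleted edge is $e$, which is disjoint from $\{u_1,u_2,u_3\}$). So $x$ has neighbourhood exactly $\{u_1,u_2,u_3\}$ in $H$, which makes $x$ a twin of $v$ in $H$. This contradicts the choice of $v$ as a vertex without twins, as provided by Lemma~\ref{lem:2circuit2conn}.

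This last case is the only place where the twin-free choice of $v$ is used, and it is the step to get right. The remaining steps are routine degree bookkeeping, using that $e$ avoids $\{v,u_1,u_2,u_3\}$.
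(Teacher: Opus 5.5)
Your proposal is correct and follows essentially the same route as the paper. The paper notes that $G-v'=H-e\cong G-v$ contains no $K_4$ or $K_{3,3}$, so any such copy must use both non-adjacent twins $v,v'$; that kills $K_4$, and for $K_{3,3}$ it forces the opposite class to be $\{u_1,u_2,u_3\}$ and the third vertex to be a twin of $v$ in $H$. You run the same argument, and you also spell out the step the paper leaves implicit: a $K_4$ or $K_{3,3}$ subgraph of a connected cubic graph must be the whole graph.
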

\begin{proof}
Let $v'$ be the newly added twin of $v$	in $G$. Since $G-v'=H-e$ does not have a $K_4$ or $K_{3,3}$ subgraph, the same holds for $G-v$, which is isomorphic. So, for $G$ to contain either subgraph, both $v$ and $v'$ must be used. This is not possible for a copy of $K_4$ since there is no edge between $v$ and $v'$. 

For the same reason if $G$ contained a copy of $K_{3,3}$, the vertices $v$ and $v'$ would be in the same partition class of this copy. Then the opposite class would be $\{u_1,u_2,u_3\}$, and let $v''$ be the remaining vertex of this $K_{3,3}$. 
Note that $v''$ has the same neighbours in $H$ as in $G$, and so it is a twin $v$ in $H$, contradicting our assumption.
\end{proof}
\noindent
Since, by Lemma~\ref{lem:3conn}, $H$ is $3$-connected, by Menger's theorem there exist three internally disjoint paths in $H$ between $w_1$ and $v$. Let us call them $P_1,P_2,P_3$. Without restriction we may assume that $u_i\in V(P_i)$ for $i=1,2,3$, and let 
$$U=V(P_1)\cup V(P_2)\cup V(P_3)\setminus \{v,u_1,u_2,u_3\}.$$ Note also that, since $\deg_H(w_1)=3$, the vertex $w_2$ must be the neighbour of $w_1$ on one of the $P_i$, and without loss of generality, let us suppose $w_2\in V(P_2)$. Observe that $v(G)=v(H)+1$, and that $G$ has two vertices of degree $2$, namely $w_1,w_2$, three of degree $4$, namely $u_1,u_2,u_3$, and the rest are of degree $3$. 

Hence, by the $0$-extension property applied iteratively along the paths $P_1,P_2-e$ and $P_3$, starting with $w_1$ and $w_2$, we obtain that $G'=G[V(G)\setminus U]$ is $\M$-dependent. So, $G'$ contains an $\M$-circuit $C$ as a subgraph. Note however that in $G'$ every vertex has degree at most $3$, so $C\subseteq G'$ must be cubic by Lemma~\ref{lem:subcubic}. Furthermore, since $C$ is a subgraph of $G$, by Lemma~\ref{lem:cubicnoK4K33} we have $C\neq K_4,K_{3,3}$. So, $C$ is connected, cubic, $\M$-dependent, not isomorphic to $K_4$ or $K_{3,3}$ and has strictly fewer vertices than $H$, since 
$$v(C)\leq v(G')\leq v(G-w_1-w_2)=v(G)-2=v(H)-1.
$$
This contradicts the minimality assumption on $v(H)$. Hence, $H$ as assumed cannot exist, and the proof of Theorem~\ref{thm:cubic} is completed.
\medskip

\noindent
By Corollary~\ref{cor:cubicorient} and Corollary~\ref{cor:redblue} we obtain the following structural statement about cubic graphs (note that closed trails in cubic graphs are cycles).
\begin{cor}
Any connected cubic graph $H\neq K_4,K_{3,3}$ can be red/blue coloured such that
\begin{itemize}
	\item The red and blue graphs $R$ and $B$ are forests,
	\item There exist tree orders $<_{_R}$ and $<_{_B}$ on $V(G)$ such that $v<_{_R} w$ iff $w<_{_B} v$, and 
	\item There is no colour-alternating directed cycle $$v_0>_{_R} v_1>_{_B} v_2>_{_R}\dots>_{_B} v_{2m}=v_0.$$ 
\end{itemize}
\end{cor}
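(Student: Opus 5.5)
The statement is the final corollary. We obtain it by combining Corollary~\ref{cor:cubicorient} with the colouring $\bar\C=(r(R),B)$ built in Section~\ref{sec:H}, as recorded in Corollary~\ref{cor:redblue}.

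First, let $H\neq K_4,K_{3,3}$ be connected and cubic. By Theorem~\ref{thm:cubic}, $H$ is $\h$-independent. By Theorem~\ref{thm:Bernorig} (equivalently, Corollary~\ref{cor:cubicorient}), $H$ then has a Bernstein orientation $D$. We apply the construction of Section~\ref{sec:H} to $D$: order the vertices so that arcs go from smaller to larger, form the forest $F$, orient it as $\vec F$ with out-degrees at most $1$, and colour the arcs to get $R$ and $B$. The configuration $\bar\C=(r(R),B)$ is recoverable and satisfies $\max(\Delta^+(r(R)),\Delta^+(B))\leq 1$.

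Next, we read off the three bullet points exactly as in Corollary~\ref{cor:redblue}.
\begin{itemize}
\item \emph{Forests.} As remarked after the definition of $UFP$, recoverability forces both colour classes to be forests. A monochromatic cycle would have to be directed, since every out-degree is at most $1$, and reversing it would give a second configuration with the same degree function.
\item \emph{Tree orders.} Out-degree at most $1$ in each colour makes every component of $r(R)$ and of $B$ an in-arborescence. This yields tree orders $<_{_R}$ and $<_{_B}$, taken along the arc directions. The reversal $r$ relating $R$ to $r(R)$ gives the stated compatibility: $v<_{_R}w$ if and only if $w<_{_B}v$ on the arcs of the original orientation.
\item \emph{No alternating trail.} A colour-alternating trail $v_0>_{_R}v_1>_{_B}\cdots$ in $\bar\C$ corresponds, after undoing $r$ on the red arcs, to an alternating closed trail in $D$. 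Such a trail is excluded because $D$ is Bernstein.
\end{itemize}

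Finally, we specialise to cubic graphs. In a cubic graph a closed trail cannot revisit a vertex: a trail entering and leaving a vertex twice would need four edge-ends there, but the vertex has degree $3$. So every closed trail in $H$ is a cycle, and the third bullet becomes the absence of a colour-alternating directed cycle.

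The only step needing care is the middle one, the translation between alternating trails in $D$ and colour-alternating trails in $\bar\C$. This is the direction of Corollary~\ref{cor:redblue} whose proof the paper leaves implicit. I would check it by tracking the orientation of each arc under $r$: a red arc $x\to y$ of $R$ becomes $y\to x$ in $r(R)$. Hence consecutive red and blue arcs sharing a vertex with opposite directions in $\bar\C$ share it with the same direction in $D$, which is exactly the alternating condition in $D$.
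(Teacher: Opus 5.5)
Your proposal is correct and follows the paper's own route: Theorem~\ref{thm:cubic} together with Bernstein's theorem (that is, Corollary~\ref{cor:cubicorient}) feeds into the colouring $\bar\C$ summarised in Corollary~\ref{cor:redblue}, and closed trails in a cubic graph are cycles. One small sharpening is that the compatibility of $<_{_R}$ and $<_{_B}$ is best read off from the vertex ordering of $D$: blue arcs go up and arcs of $r(R)$ go down, so the index order and its reverse serve as linear extensions of $<_{_B}$ and $<_{_R}$, respectively.
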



\section{Concluding remarks}\label{sec:D}

\subsection*{Matroidal families vs matroids}
We remark that the na\"ive analogue of Theorem~\ref{thm:Laman1} for $2$-rigidity matroids for a fixed $n$ (as opposed to $2$-rigidity families) is false even under symmetry assumption: there exist symmetric $2$-rigidity matroids different from $\R_n$ in which every copy of $K_{3,3}$ is independent. An example for $n=6$ is the matroid whose bases are those of $\R_6$, excluding every copy of the triangular prism graph. That said, we are not aware of any further examples, and we checked by hand that (barring human error) there are none for $n=7$. In fact, Theorem~\ref{thm:Laman1} is `almost true' for symmetric $2$-rigidity matroids. Note that in the proof we only used heredity with one extra vertex, in Lemma~\ref{lem:tripling}. Therefore, it can be similarly shown that for any symmetric $2$-rigidity matroid $\M_{n}$ in which $K_{3,3}$ is independent, its restriction to (any) $n-1$ vertices is isomorphic to $\R_{n-1}$. In other words, graphs that are independent in $\R_n$ but not in $\M_n$ must be spanning on $[n]$. 
We are wondering if this distinction can occur for arbitrarily large $n$. 
\begin{conj}
There are only finitely many values of $n$ for which there exists a symmetric abstract $2$-rigidity matroid $\M_n\neq \R_n$ in which $K_{3,3}$ is independent. 
\end{conj}
\subsection*{Hierarchy of rigidity matroids}
For two (not necessarily graph) matroids $M$ and $N$ on the same ground set $E$, $N$ is said to be \emph{freer} than $M$, in notation $M\leq N$, if every $M$-independent set is $N$-independent. This equips any set of matroids on $E$ with a poset structure, and for the set of $2$-rigidity matroids on $E=\binom{[n]}{2}$, by Proposition~\ref{prop:lamanmax}, $\R_n$ is the unique maximum in this poset. It is natural to extend this notion to matroidal families: $\N$ is freer than $\M$ if every $\M$-independent graph is $\N$-independent. This again turns any set of matroidal families into a poset, and among the $2$-rigidity families $\R$ is the maximum. 

In particular, we have $\h<\R$. A central open problem in combinatorial rigidity is the Jackson-Tanigawa conjecture~\cite{Jac24} stating that $\h_n$ is the freest among $2$-rigidity matroids in which every copy of $K_{3,3}$ is a circuit. This would imply that $\h$ is the freest $2$-rigidity family in which $K_{3,3}$ is a circuit, and combined with our Theorem~\ref{thm:Laman1} this would make $\h$ the unique second-freest $2$-rigidity family. In order to make progress on this conjecture we would need a better understanding of $\h$, and we hope that our Theorem~\ref{thm:bernsteincharp} and its proof will be helpful in this matter.

We remark that there is a large body of work on related questions for $d\geq 3$, see~\cite{Cre23, CJJT25}. Unlike for $d=2$ it is not known in general that the freeness poset of all abstract $d$-rigidity matroids on $n$ vertices has a unique maximum. Whiteley~\cite{Whi96} conjectured that the maximum is attained by the so-called \emph{generic cofactor matroid}. This was recently confirmed for $d=3$ by Clinch, Jackson and Tanigawa~\cite{Clinch1} (see also \cite{Clinch2}). 
\subsection*{Weak saturation}
The author's interest in the topic stems from his work on weak saturation~\cite{btt21, CPSTZ, Shty, TT26}. 
For two graphs $G$ and $H$, $G$ is said to be \emph{weakly $H$-saturated} if the edges in $\binom{V(G)}{2}\setminus G$ can be ordered such that adding each next edge creates a new copy of $H$. The \emph{weak saturation number} of $H$, denoted $wsat(n,H)$, is the smallest number of edges in a weakly $H$-saturated graph on $n$ vertices.

Weak saturation was introduced by Bollob\'as~\cite{Bollobas} who determined $wsat(n,K_{d+2})$ for $d\leq 4$ to be $dn-\binom{d+1}{2}$, and conjectured this to hold for all $d$. This was confirmed indirectly by Lov\'asz~\cite{Lovasz77} in a paper that introduced the exterior algebra techniques in extremal combinatorics. First explicit proofs were given by Frankl~\cite{Frankl82} and Kalai~\cite{Kal84, Kal85}, and further proofs were found by
Alon~\cite{Alon} and Blokhuis~\cite{Blo90}. Typically, the weak saturation numbers are easy to guess and the upper bound constructions are natural, while the lower bound proofs are difficult. In almost all cases the proofs are not purely combinatorial, but use tools from algebra and geometry. 

Kalai~\cite{Kal85} formulated a principle allowing to search for lower bounds constructively:
if in some graph matroid $\M_n$ every copy of $H$ is a circuit then $wsat(n,H)\geq r(\M_n)$. In this light, observe that the matroids that can be used to determine $wsat(n,K_{d+2})$ are precisely the $d$-rigidity matroids, and under the natural additional assumptions  of symmetry and heredity one arrives at $d$-rigidity families. In particular, $2$-rigidity families are behind weak saturation with respect to $H=K_4$. 


\subsection*{Assumption of symmetry} Finally, let us remark that, while most $d$-rigidity matroids studied are symmetric (note that all $1$-rigidity matroids are), asymmetric example for $d\geq 2$ do exist. A common source would be rigidity matroids of non-generic bar-joint frameworks (e.g.~\cite{gra93textbook} Exercise 4.3). Another example from ~\cite{gra93textbook} (Corollary 4.6.1, Exercise 4.34), which is not an infinitesimal rigidity matroid is as follows. Take $\R_6$, pick a single copy of $K_{3,3}$ and declare it a circuit (instead of independent).
We are wondering if there are interesting examples of families of asymmetric $d$-rigidity matroids.



\bibliographystyle{abbrv}
\bibliography{refs}

\end{document}